\newtheorem{thm}{Theorem}[section]
\newtheorem{cor}[thm]{Corollary}
\newtheorem{prop}[thm]{Proposition}
\newtheorem{lem}[thm]{Lemma}
\theoremstyle{definition}
\newtheorem{rem}[thm]{Remark}
\newtheorem{exa}[thm]{Example}
\numberwithin{equation}{section}
\def\eq#1{{\rm(\ref{#1})}}
\def\Eq#1#2{\ifthenelse{\equal{#1}{*}}
  {\begin{equation*}\begin{aligned}[]#2\end{aligned}\end{equation*}}
  {\begin{equation}\begin{aligned}[]\label{#1}#2\end{aligned}\end{equation}}}
  \newcommand{\upRiemannint}[2]{
  \overline{\int_{#1}^{#2}}
}
\def\A{\mathscr{A}}
\def\iM{\mathcal{M}}
\def\D{\mathscr{D}}
\def\G{\mathscr{G}}
\def\M{\mathscr{M}}
\def\Nm{\mathscr{N}}
\def\LM{{\M^L}}
\def\UM{{\M^U}}
\def\P{\mathscr{P}}
\newcommand{\operator}[1]{\mathop{\vphantom{\sum}\mathchoice
{\vcenter{\hbox{\LARGE $#1$}}}
{\vcenter{\hbox{\Large $#1$}}}{#1}{#1}}\displaylimits}
\DeclareMathOperator\esssup{esssup}
\DeclareMathOperator\essinf{essinf}
\def\Mm{\operator{\mathscr{M}}}
\def\Ar{\operator{\mathscr{A}}}       
\newcommand\R{\mathbb{R}}
\newcommand\N{\mathbb{N}}
\newcommand\Z{\mathbb{Z}}
\newcommand\Q{\mathbb{Q}}
\newcommand\K{\mathbb{K}}
\newcommand\calL{\mathcal{L}}
\thanks{I gratefully thank the anonymous referee for his/her detailed comments which greatly helped us to improve the presentation.}
\newcommand{\norm}[1]{\left\| #1 \right\| }
\newcommand{\abs}[1]{\left| #1 \right| }
\newcommand{\tauto}{\overset{\tau}\to}
\DeclareMathOperator{\cl}{cl}
\DeclareMathOperator{\Rg}{Rg}
\DeclareMathOperator{\Dom}{Dom}
\DeclareMathOperator{\Graph}{Graph}
\DeclareMathOperator{\RG}{RG}
\DeclareMathOperator{\interior}{int}
\newcommand{\iHc}{\mathbf{H}}
\newcommand{\Hc}{\mathscr{H}}
\newcommand{\Est}[2][SKIPPED]{
\ifthenelse{\equal{#1}{SKIPPED}}
  {
    \ifthenelse{\equal{#2}{}}
      {\mathscr{C}}
      {\mathscr{C}(#2)}
  }
  {
    \ifthenelse{\equal{#2}{}}
      {\mathscr{C}_{#1}}
      {\mathscr{C}_{#1}(#2)}
  }
}
\author[P. Pasteczka]{Pawe\l{} Pasteczka}
\address{Institute of Mathematics \\ Pedagogical University of Krakow \\ Podchor\k{a}\.zych str. 2, 30-084 Krak\'ow, Poland}
\email{pawel.pasteczka@up.krakow.pl}
\subjclass[2010]{26E60, 26D15}
\keywords{Hardy inequality, Hardy constant, convexity, integral mean, bounded operator, averaging operator.
}
\title{On the integral approach to means and their Hardy property}
\begin{document}
\begin{abstract}
The celebrated Hardy inequality can be written in the form 
$$\int_0^\infty \mathcal{P}_p \big(f|_{[0,x]}\big)dx \le (1-p)^{-1/p} \int_0^\infty f(x)\:dx
\qquad \text{ for }p\in(0,1)\text{ and }f \in L^1\text{ with }f\ge0,$$
where $\mathcal{P}_p$ stands for the $p$-th power mean. One can ask about possible generalizations of this property to another families (with sharp constant depending on the mean).

Adapting the notion of Riemann integral, for every weighted mean we define the lower and the upper integral mean. We prove that every symmetric, monotone, $\mathbb{R}$-weighted mean on $I$ which is continuous in its entries and weights has at most one continuous extension to the integral one. Moreover this extension preserves the Hardy constant. 
This result allows to extend the latter inequality to the family of homogeneous, concave deviation means.

\end{abstract}
\maketitle


\section{Introduction}
Classical Hardy inequality \cite{Har20} is expressed in two forms (see also Landau \cite{Lan21})
\Eq{*}{
\sum_{n=1}^\infty \Big(\frac{a_1+\dots+a_n}n\Big)^p &\le \Big(\frac{p}{p-1}\Big)^{\frac1p} \sum_{n=1}^\infty a_n^p
&\qquad \text{ for }p>1\text{ and }a \in \ell^p\text{ with }a_n\ge0;\\
\int_0^\infty \Big(\frac1x \int_0^x f(t)\:dt\Big)^{p}\:dx &\le \Big(\frac{p}{p-1}\Big)^{\frac1p} \int_0^\infty f(x)^p \:dx
&\qquad \text{ for }p>1\text{ and }f \in L^p\text{ with }f\ge0.
}
Knopp \cite{Kno28} extend inequalities above to the case $p<0$.
With suitable substitution we can redefine these inequalities in term of power means, that is 
\begin{subequations}
\begin{align}
\sum_{n=1}^\infty \P_p(a_1,\dots,a_n)&\le C_p \sum_{n=1}^\infty a_n 
&\qquad \text{ for }a \in \ell^1\text{ with }a_n\ge0; \label{E:PowD}\\
\int_0^\infty \mathcal{P}_p \big(f|_{[0,x]}\big)dx &\le C_p \int_0^\infty f(x)\:dx
&\qquad \text{ for }f \in L^1\text{ with }f\ge0, \label{E:PowI}
\end{align}
\end{subequations}
where $\P_p$ is a (discrete) $p$-th Power mean and $\mathcal{P}_p$ is its integral counterpart. The value of $C_p$ equals
\Eq{*}{
C_p:=
\begin{cases} 
(1-p)^{-1/p}&p \in (-\infty,0) \cup (0,1), \\ 
e & p=0, \\
\end{cases} 
}
and these constants are sharp (in the limit case $p=0$ it is a result due to Carleman \cite{Car32}).  For the detailed history of this inequality we refer the reader to the book of Kufner-Maligranda-Persson \cite{KufMalPer07}. This approach was generalized by P\'ales-Persson \cite{PalPer04} who introduced the notion of (discrete) Hardy means by replacing the power mean in \eq{E:PowD} by an arbitrary mean $\M$ and constant $C_p$ by the sharp constant depending on $\M$, so-called \emph{Hardy constant} $\Hc(\M)\in(1,+\infty]$ (see section~\ref{sec:Hardy} for details). 
%
In the present paper we are going to generalize \eq{E:PowI} in the same way. To this end, we define integral means as a generalization of weighted means and deliver some extension-type results.
An important intermediate step is the weighted Hardy property, introduced recently in \cite{PalPas19a}. This issue can be sketched in the following way
\Eq{*}{
\text{discrete Hardy} \enspace\xrightarrow[]{established~in~2019}\enspace\text{weighted Hardy}\enspace\xrightarrow[]{aim~of~this~paper}\enspace\text{integral Hardy .}
}

In what follows, we  recall the notion of the weighted means. Later we introduce the new definition of the integral means as the  generalization of weighted ones.

At the  very end we study another property of means (i.e. distance between means) and prove that the (abstract) generalization of Deviation means coincides with the natural one. These results are also applied to the important subset of deviation means -- Gini means.

\section{Weighted means}
Definition of weighted mean first appeared in \cite{PalPas18b} in the process of reverse engineering. The aim was to generalize of so-called Kedlaya inequality (see also \cite{Ked94,Ked99}). This definition covers several particular cases of weighted means. We are going to recall a brief form of this definition. From now on $I \subset \R$ stands for an arbitrary interval and $R$ is an arbitrary subring of $\R$. For $n\in\N$ define the set of $n$-dimensional weight vectors
\Eq{*}{
  W_n(R):=\{(\lambda_1,\dots,\lambda_n)\in R^n\mid\lambda_1,\dots,\lambda_n\geq0,\,\lambda_1+\dots+\lambda_n>0\}.
}
An \emph{$R$-weighted mean on $I$} is a function 
$\M \colon \bigcup_{n=1}^{\infty} I^n \times W_n(R) \to I$ which satisfies four axioms: nullhomogeneous in the weights, reduction principle, mean value property and elimination principle. Following \cite{PalPas18b}, let us introduce elementary properties of these means. A weighted mean $\M$ is said to be \emph{symmetric}, if for all $n \in \N$, $x \in I^n$, $\lambda \in W_n(R)$, and a permutation $\sigma \in S_n$ we have $\M(x,\lambda) =\M(x\circ\sigma,\lambda\circ\sigma)$. 
Mean $\M$ is \emph{monotone} if it is nondecreasing in each of its entry. Furthermore $\M$ is \emph{convex} if for every $n \in \N$ and $\lambda \in W_n(R)$ the mapping $I^n \ni x \mapsto \M(x,\lambda) \in I$ is convex (or equivalently, by Bernstein-Doetsch theorem~\cite{BerDoe15}, Jensen convex). Similarly we introduce the definition of concavity. Furthermore we say that $\M$~is \emph{continuous in weights} if for all $n \in \N$ and $x \in I^n$ the mapping $\M(x,\cdot)$ is continuous on $W_n(R)$. Analogously $\M$ is \emph{continuous in entries} is for all $n \in \N$ and $\lambda \in W_n(R)$ the mapping $\M(\cdot,\lambda)$ is continuous on $I^n$. Naturally $\M$ is \emph{continuous in entries and weights} if it is continuous on each $I^n \times W_n(R)$ (as a multivariable function).

It can be proved that every $R$-weighted mean admit a unique extension to $R^*$-weighted mean ($R^*$ stands for the quotient field, i.e. the smallest field generated by $R$). Moreover this extension preserves all properties above (cf. \cite[Theorems 2.2--2.5]{PalPas18b}). Thus from now on we always assume that weighted means are defined on the fields.
Therefore every repetition invariant mean (being a $\Z$-weighted mean) can be extend to the $\Q$-weighted mean and, whenever there exists a continuous extension, to the $\R$-weighted mean. What is more, these extensions are uniquely determined and in most cases they coincide with  already known generalizations.


\subsection{$\K$-simple functions. Sum-type and integral-type notation}
As we are going to generalize weighted means to the integral ones, we recall certain function-type approach to weighted means from \cite{PalPas18b}. 

First we introduce so-called $\K$-intervals (here and below $\K$ is an arbitrary subfield of $\R$). We say that $D\subseteq\R$ is a \emph{$\K$-interval} if $D$ is of the form $[a,b)$ for some $a,b\in \K$. For a given $\K$-interval $D=[a,b)$, a function $f\colon D\to I$ is called \emph{$\K$-simple} if there exist a partition of $D$ into a finite number of $\K$-intervals $\{D_i\}_{i=1}^n$ such that:
\begin{enumerate}[(i)]
 \item $\sup D_i=\inf D_{i+1}$ for all $i\in\{1,\dots,n-1\}$, 
 \item $f$ is constant on each $D_i$.
 \end{enumerate} 
 
 In order to avoid some technical issues we extend this definition to all bounded intervals with endpoints in $\K$ in a natural way.  
 Then, for a $\K$-weighted mean $\M$ on $I$ and \mbox{$\K$-simple} function $f$ like above, we define
\Eq{E:int_notion}{
\Mm f(x) dx:=\M\big((f|_{D_1},\dots,f|_{D_n}),(|D_1|,\dots,|D_n|)\big),
}
where $|\cdot|$ stands for the Lebesgue measure of the set; furthermore whenever we say about measure we refer to the Lebesgue one.
Then $\M$ is monotone if and only if for every pair of $\K$-simple functions $f,g \colon D \to I$ with $f\le g$ the inequality $\M f(x)dx \le \M g(x)dx$ is valid.
This notion is natural in some sense as for the arithmetic mean (denoted by $\A$) and an $\R$-simple function $f \colon D \to \R$ we have
\Eq{*}{
\Ar f(x)dx=\frac{1}{|D|} \int_D f(x)\:dx=\fint_D f(x)\:dx.
}

To define the notion of symmetry let us first introduce equidistributed functions. We say that the pair $f_1 \colon D_1 \to \R$ and $f_2 \colon D_2 \to \R$ 
(where $D_1,D_2 \subseteq \R$)
of Lebesgue measurable functions is \emph{equidistributed} if 
$\abs{f_1^{-1}(U)}=\abs{f_2^{-1}(U)}$ for all open subintervals (or, equivalently, Borel subsets) $U$ of $\R$. One can show that $f_1,f_2$ as above are equidistributed if and only if
\Eq{*}{
\int_{D_1} \varphi(f_1(x))\:dx=\int_{D_2} \varphi(f_2(x))\:dx
}
 for every continuous, compactly supported test function $\varphi \colon I \to \R$.

 Observe that if $D_i$-s are bounded then, applying test function $\varphi \equiv 1$, we obtain that they have the same measure.

 \medskip

Using this notion we say that $\M $ is \emph{symmetric} if $\M f(x)dx=\M g(x)dx$ for every pair of equidistributed $\K$-simple functions $f,g \colon D \to I$. 
Moreover we keep the definite-integral-type convention, i.e. $\Mm_a^b f(x) dx:= \Mm f|_{[a,b)}(x)dx$ for $a,\,b \in \K \cap D$ with $a<b$.

\section{Functional and integral means}
First, we need to adapt the mean value property to the integral setting. For two intervals $I,\,J \subset \R$ let $\calL(J,I)$ be a family of all integrable functions $f \colon J \to I$. We abbreviate this notion to $\calL(I):=\bigcup_{J \text{ bounded}} \calL(J,I)$. Now for all $f\in \calL(\R)$ define $\RG(f)$ as the smallest closed interval containing the range of $f$. Then a \emph{functional mean on $I$} is a function $\iM \colon X \to I$ such that $X \subset \calL(I)$, and
$\iM(f) \in \RG(f)$ for every $f \in X$.  For a bounded functions we obviously have $\RG(f)=[\inf f,\sup f]$.

In what follows we introduce few properties of functional means. They are all adapted from the weighted counterpart. Functional mean $\iM$ is \emph{monotone} if for all $f,g \in X$ with equal domains such that $f\le g$ we have $\iM(f)\le \iM(g)$. We say that $\iM$ is \emph{convex} if $X$ is convex and for all $f,g \in X$ with equal domains and $\alpha \in [0,1]$ we have $\alpha f+(1-\alpha)g \in X$ and $\iM(\alpha f+(1-\alpha)g)\le \alpha \iM(f)+(1-\alpha) \iM(g)$; if this inequality is reversed then $\iM$ is \emph{concave}.

Let us now specify two more properties. They are slightly different then the one which were introduced in a weighted setting. We say that $\iM$ is an \emph{integral mean} (or \emph{integral-type mean}) if for all $f,g \in \calL(I)$ such that $f=g$ almost everywhere we have $\iM(f)=\iM(g)$. 

To illustrate the difference between the functional- and integral-type means let us mention that $\sup$ and $\inf$ are function-type while $\esssup$ and $\essinf$ are integral-type (obviously they are functional-type too).

Similarly to the weighted setting we say that an integral mean $\iM \colon X \to I$ is \emph{symmetric} if for every pair of equidistrubuted functions $f,\,g  \in \calL(I)$ we have $f \in X$ if and only if $g \in X$ and, moreover, $\iM(f)=\iM(g)$ (whenever these functions belong to $X$).

Now, based on \cite[section~2.8.2]{Duo01}, for every function  $f \in \calL(J,\R)$ such that $\RG(f)$ is bounded we define its \emph{decreasing rearrangement} $f^*\colon\big[0,|J|\big) \to \RG(f)$ by
\Eq{*}{
f^*(\tau):=\inf\big\{t\in \RG(f) \colon \big|\{x\colon f(x)>t\}\big|\leq\tau\big\}\qquad \text{ for }\tau \in [0,|J|).
}

Let us recall the main property of this transformation which arise from \cite{Duo01}.
\begin{lem}
 Let $I,J \subset \R$ be bounded intervals and $f \in \calL(J,I)$. Then $f^*$ is the unique nonincreasing and right continuous function on $[0,|J|)$ which is equidistributed with $f$.
\end{lem}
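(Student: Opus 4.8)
The plan is to route everything through the \emph{distribution function} of $f$, namely $d_f\colon\R\to[0,|J|]$ given by $d_f(t):=\abs{\{x\in J\colon f(x)>t\}}$, and to prove a single ``duality'' equivalence
\Eq{*}{
f^*(\tau)>t\quad\Longleftrightarrow\quad \tau<d_f(t)\qquad\text{for all }\tau\in[0,|J|),\ t\in\R.
}
Granting this, all assertions of the lemma follow quickly. First one records the elementary features of $d_f$: it is nonincreasing; it is right continuous, since as $s\downarrow t$ the sets $\{f>s\}$ increase to $\{f>t\}$ and Lebesgue measure is continuous from below; and, because $\RG(f)$ is the smallest closed interval containing the range of $f$, one has $d_f(t)=|J|$ for $t$ below the left endpoint of $\RG(f)$ and $d_f(t)=0$ for $t$ at or above the right endpoint (in particular $\{s\colon d_f(s)\le\tau\}\neq\emptyset$ for every $\tau\ge0$).

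To obtain the duality equivalence I would first argue that, for each fixed $\tau\in[0,|J|)$, the set $\{s\in\R\colon d_f(s)\le\tau\}$ is a \emph{closed} half-line $[c_\tau,+\infty)$: it is an up-set because $d_f$ is nonincreasing, and it contains its infimum because $d_f$ is right continuous. Since $d_f\equiv|J|>\tau$ to the left of $\RG(f)$ while $d_f=0\le\tau$ at the right endpoint of $\RG(f)$, we get $c_\tau\in\RG(f)$; hence the infimum in the definition of $f^*(\tau)$ --- which is taken over $t\in\RG(f)$ --- equals $c_\tau$. Consequently $t<f^*(\tau)=c_\tau$ is equivalent to $t\notin[c_\tau,+\infty)$, i.e.\ to $d_f(t)>\tau$, which is exactly the claimed equivalence. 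I expect this to be the most delicate step: it is precisely where right continuity of $d_f$ and the convention that the defining infimum ranges over $\RG(f)$ rather than over all of $\R$ are simultaneously used, and where the endpoint cases have to be inspected.

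With the equivalence in hand, $f^*$ is nonincreasing straight from its definition (enlarging $\tau$ enlarges the admissible set of $t$'s). For equidistribution I would compute, for every $t\in\R$,
\Eq{*}{
\abs{\{\tau\in[0,|J|)\colon f^*(\tau)>t\}}=\abs{\{\tau\in[0,|J|)\colon\tau<d_f(t)\}}=\min\big(d_f(t),|J|\big)=d_f(t),
}
the last equality because $d_f(t)\le|J|$. Thus $f$ and $f^*$ have the same distribution function; since a finite Borel measure on $\R$ is determined by its values on the rays $(t,+\infty)$, the push-forwards of Lebesgue measure under $f$ and under $f^*$ coincide, which is exactly the asserted equidistribution (equivalently, one first checks $\abs{f^{-1}(U)}=\abs{(f^*)^{-1}(U)}$ for half-open intervals $U=(a,b]$ and then extends by a standard monotone-class argument). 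Right continuity of $f^*$ comes along as well: the computation just made in fact yields $\{\tau\colon f^*(\tau)>t\}=[0,d_f(t))$ for every $t$, and a function on $[0,|J|)$ whose every strict upper level set is a left-closed interval of the form $[0,\cdot)$ is automatically nonincreasing and right continuous.

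Finally, for uniqueness, let $g\colon[0,|J|)\to\R$ be nonincreasing, right continuous and equidistributed with $f$. Applying equidistribution to the Borel sets $(t,+\infty)$ gives $d_g=d_f$ on $\R$. Exactly as for $f^*$, monotonicity together with right continuity of $g$ forces $\{\tau\colon g(\tau)>t\}=[0,d_g(t))$ for every $t$; hence $g$ and $f^*$ have identical strict upper level sets, so $g(\tau)=\sup\{t\colon\tau<d_g(t)\}=\sup\{t\colon\tau<d_f(t)\}=f^*(\tau)$ for all $\tau$, i.e.\ $g=f^*$. (Alternatively, one may simply invoke \cite[section~2.8.2]{Duo01}, where this statement is established; the reasoning above merely reproduces its content.)
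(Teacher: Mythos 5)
Your proof is correct. Note that the paper does not actually prove this lemma: it is stated as a recalled fact with a pointer to \cite[section~2.8.2]{Duo01}, and your argument via the duality $f^*(\tau)>t\Leftrightarrow\tau<d_f(t)$ for the distribution function $d_f$ is precisely the standard one from that reference, here carefully adapted to the paper's convention that the infimum defining $f^*$ ranges over $\RG(f)$ (your verification that $c_\tau\in\RG(f)$, using $\tau<|J|$ and the boundedness of $\RG(f)$, is exactly the point where that convention matters). So you have supplied a complete, self-contained proof of a statement the paper only cites.
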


Now we are heading towards the following problem. Let $\M$ be a $\K$-weighted mean on $I$. Then, as we already proved, $\M$ generates an integral mean on a family of all $\K$-simple functions. We intent to extend this domain.
To this end, we modify the definition of the Lebesgue integral.

\subsection{$\K$-simple topology}

$I,\ J\subset \R$ be intervals such that $J$ is $\K$-simple.
For all $f \in \calL(J,I)$ and $\varepsilon>0$ we define the set
\Eq{*}{
B_\varepsilon^\K(f):=\big\{ g \colon J \to I \colon g \text{ is a }\K\text{-simple function},\, | \RG(g) \setminus \RG(f)|<\varepsilon,\text{ and} \norm{f-g}_1 < \varepsilon\big\}.
}
Observe that for all $\varepsilon>0$ we have $f \in B_\varepsilon^\K(f)$ if and only if $f$ is $\K$-simple. 
Thus $B_\varepsilon^\K$ can be considered to be either a neighbourhood of a vicinity of $f$. In order to avoid this drawback, for a measurable function $f$ as above and $\varepsilon \in (0,+\infty)$ we define its $\varepsilon$-neighbourhood $\mathcal{B}_{\varepsilon}^\K(f):=\{f\} \cup B_\varepsilon^\K(f)$. Then for all $f \in \calL(I)$ the family $(\mathcal{B}_\varepsilon^\K(f))_{\varepsilon>0}$ is the local basis of some topology $\tau_\K$ on $\calL(J,I)$. 

For a fixed interval $I$ we sum up topologies $\tau_\K$ on $\calL(J,I)$ over all bounded subintervals $J$ of $\R$. This led us to the topology $\tau_\K$ on $\calL(I)$ defined as follows. For a given $f \in \calL(I)$ and a sequence $(g_n)$ having entries is $\calL(I)$ we have $g_n \to f$ in $\tau_\K $ if and only if all conditions below are satisfied:
\begin{enumerate}[1.]
 \item there exists $n_0\in\N$ such that $\Dom(g_n)=\Dom(f)$ for all $n\ge n_0$; 
  \item there exists $n_1\in\N$ such that $g_n$ is $\K$-simple for all $n\ge n_1$;
\item for all $\varepsilon>0$ there exists $p_\varepsilon\in \N$ such that $|\RG(g_n)\setminus\RG(f)|<\varepsilon$ for all $n>p_\varepsilon$;
\item for all $\varepsilon>0$ there exists $q_\varepsilon\in \N$ such that $\norm{g_n-f}_1<\varepsilon$ for all $n>q_\varepsilon$.
 \end{enumerate}

Obviously we can assume without loss of generality that $n_0=n_1$ and $p_\varepsilon=q_\varepsilon$ for all $\varepsilon>0$, as it is handy.
Furthermore, as the finite prefix does not affect the convergence of sequence, we can assume that $n_0=n_1=1$,
which is equivalent to the fact that all elements in the sequence are $\K$-simple functions defined on $\Dom(f)$.

Whenever the filed $\K$ is known we abbreviate these notions. More precisely we will denote briefly $\tau$ instead of $\tau_\K$, $B_\varepsilon(f)$ instead of $B_\varepsilon^\K(f)$ and $g \tauto f$ to denote the convergence to $f$ in the topology $\tau=\tau_\K$.

Let us now show few preliminary results concerning functional means. First we show that $B_\varepsilon$ is never empty.

\begin{lem}\label{lem:INT1}
Let $J\subset \R$ be a $\K$-simple interval, $I$ be a bounded interval, and $f \in \calL(J,I)$. Then for every $\varepsilon>0$ the set $B_\varepsilon^\K(f)$ is nonempty.
\end{lem}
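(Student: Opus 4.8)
The statement asserts that for a bounded interval $I$, a $\K$-simple interval $J$, a function $f\in\calL(J,I)$, and any $\varepsilon>0$, the set $B_\varepsilon^\K(f)$ contains at least one $\K$-simple function. So I must produce a single $\K$-simple $g\colon J\to I$ with $|\RG(g)\setminus\RG(f)|<\varepsilon$ and $\norm{f-g}_1<\varepsilon$. The natural strategy is a two-stage approximation: first approximate $f$ in $L^1$ by a simple function (finitely many values, measurable level sets), then approximate the level sets by finite unions of $\K$-intervals to make the function genuinely $\K$-simple, while keeping control on both the $L^1$ distance and the range.

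\textbf{Step 1: reduce the range.} Since $\RG(f)$ is a closed subinterval of the bounded interval $I$, pick finitely many points $t_0<t_1<\dots<t_m$ in $\RG(f)$ (or, more carefully, in the interior, adjusting at the endpoints) with consecutive gaps smaller than $\varepsilon/(2|J|)$, and set $g_0 := t_{k}$ on the set $\{t_{k}\le f<t_{k+1}\}$ (with the top endpoint absorbed). Then $g_0$ is a measurable simple function with $\RG(g_0)\subseteq\RG(f)$, hence $|\RG(g_0)\setminus\RG(f)|=0$, and $\norm{f-g_0}_1\le |J|\cdot\varepsilon/(2|J|)=\varepsilon/2$. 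This handles the range condition essentially for free, since every subsequent modification will only move values within $\RG(f)$.

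\textbf{Step 2: make the level sets $\K$-simple.} Now $g_0=\sum_{k} t_k \vone_{E_k}$ for a measurable partition $\{E_k\}$ of $J$. Approximate each $E_k$ by a finite union of $\K$-intervals: by regularity of Lebesgue measure and density of $\K$ in $\R$, for each $k$ choose a finite union $F_k$ of pairwise disjoint $\K$-intervals with $|E_k\triangle F_k|$ small; then disjointify the $F_k$'s across $k$ (say by processing them in order and removing overlaps, which preserves being a finite union of $\K$-intervals), and assign leftover points of $J$ arbitrarily to, say, $F_1$. The resulting function $g:=\sum_k t_k\vone_{F_k}$ is $\K$-simple (its level sets partition $J$ into finitely many $\K$-intervals, and by the ``natural extension'' clause in the definition we may take $J$ itself to be any bounded interval with endpoints in $\K$). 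One estimates $\norm{g-g_0}_1\le \sum_k |t_k|\cdot|E_k\triangle F_k|\le (\sup I)\sum_k|E_k\triangle F_k|$, which can be forced below $\varepsilon/2$ by taking the symmetric differences small enough. Since the values $t_k$ lie in $\RG(f)$, we still have $\RG(g)\subseteq\RG(f)$, so the range condition holds with value $0<\varepsilon$. Combining, $\norm{f-g}_1\le\norm{f-g_0}_1+\norm{g_0-g}_1<\varepsilon$, so $g\in B_\varepsilon^\K(f)$.

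\textbf{Main obstacle.} The only delicate point is the bookkeeping in Step 2: after approximating each $E_k$ independently by a union of $\K$-intervals, the $F_k$'s need not partition $J$ (they may overlap or miss a null-ish but nonempty set). The fix — disjointifying and dumping the remainder into one fixed piece — is routine but must be done carefully so that (a) the pieces remain finite unions of $\K$-intervals, hence $g$ is genuinely $\K$-simple, and (b) the extra symmetric-difference error introduced by the cleanup is still controlled, which follows because $|F_k\setminus\bigcup_{j<k}F_j \;\triangle\; E_k|\le\sum_j |E_j\triangle F_j|$. Everything else is a standard measure-theoretic approximation argument, and the boundedness of $I$ is exactly what keeps both the $L^1$ errors and the range under control.
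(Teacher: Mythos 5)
Your construction is correct, but it follows a genuinely different route from the paper's. The paper first reduces to $J=[0,1)$ by an affine change of variable, then pushes $f$ through Luzin's theorem and the Tietze extension to replace it by a continuous $f_1$ with $\RG(f_1)\subseteq\RG(f)$, and finally uses uniform continuity to sample $f_1$ on the uniform grid $\{i/K\}$, obtaining a $\Q$-simple (hence $\K$-simple for every subfield $\K$) step function. You instead discretize the \emph{range}: a fine net $t_0<\dots<t_m$ in $\RG(f)$ yields a measurable simple $g_0$ with $\norm{f-g_0}_1\le\varepsilon/2$ (no truncation step is needed since $I$ is bounded, so $f$ already is), and then regularity of Lebesgue measure together with the density of $\K$ in $\R$ (which holds because $\K\supseteq\Q$) lets you replace each level set by a finite union of half-open $\K$-intervals; your disjointification bookkeeping does go through, since differences and complements of finite unions of $\K$-intervals inside $J$ are again such unions, and finitely many disjoint half-open intervals partitioning $J=[a,b)$ are automatically consecutive, so the resulting $g$ is $\K$-simple in the paper's sense. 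Your route is the more elementary one (Littlewood's principles instead of Luzin/Tietze), at the price of breakpoints that depend on $\K$ rather than landing on a rational grid. The single point you should make explicit rather than parenthetical is the codomain: membership in $B_\varepsilon^\K(f)$ requires $g\colon J\to I$, and $\RG(f)$ may contain an endpoint of $I$ not belonging to $I$, so the values assigned on the extreme level sets must be chosen in $I$ — e.g.\ take on each nonempty $E_k$ a value $s_k\in f(E_k)\subseteq I$, which still differs from $f$ by less than the mesh of the net and keeps $\RG(g)\subseteq\RG(f)$. With that adjustment the argument is complete.
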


\begin{proof}
Observe that the (uniquely determined) affine bijection $\phi \colon [0,1) \to J$ maps $\K$-simple intervals to $\K$-simple intervals (as well as its converse). Therefore we can replace $f \in \calL(J,I)$ by $f \circ \phi \in \calL([0,1),I)$ as there exists a positive constant $C$ such that 
\Eq{*}{
\{\phi^{-1}\circ g \colon g \in B^\K_\varepsilon(f\circ \phi )\} \subset B^\K_{C\varepsilon}(f)\qquad\text{ for all }\varepsilon>0\text{ and }f \in \calL(J,I).
}
Thus from now one we may assume that $J=[0,1)$.
Next, as $f\colon [0,1) \to I$ is Lebesgue integrable, there exists a bounded function $f_0 \colon [0,1) \to \RG(f)$ such that $\norm{f-f_0}_1<\tfrac\varepsilon4$. 

Next, by Luzin's theorem, there exists a set compact subset  $L \subset [0,1)$ such that $f_0|_L$ is continuous and $|L|>1-\frac{\varepsilon}{4\cdot\norm{f_0}_\infty}$. Then, by Tietze (Urysohn-Brouwer) extension theorem, there exists a continuous function $f_1 \colon [0,1] \to \RG(f_0)$ such that $f_0|_L=f_1|_L$.

Then $f_1$ being a continuous function defined on a compact interval is also uniformly continuous and bounded. In particular there exists $K \in \N$ such that 
\Eq{01122020a}{
\sup_{x \in \left[\frac{i}K,\frac {i+1}K\right)} f_1(x)-\inf_{x \in \left[\frac{i}K,\frac {i+1}K\right)} f_1(x) < \tfrac{\varepsilon}{4} \quad \text{ for all }i\in\{0,\dots,K-1\}.
}

Define a function $g \colon [0,1) \to \RG(f_1)$ by 
\Eq{*}{
g|_{\left[\frac{i}K,\frac {i+1}K\right)}=f_1\big(\tfrac{i}K\big)\qquad\text{ for all }i \in \{0,\dots,K-1\}.
}
Obviously $g$ is $\Q$-simple, and therefore $\K$-simple. We show that $g \in B_\varepsilon(f)$. 
Indeed, assertion \eq{01122020a} implies $\norm{f_1-g}_\infty<\frac\varepsilon4$. Thus, binding all properties above, we obtain
\Eq{*}{
\norm{f-f_0}_1&\le\tfrac\varepsilon4,\\
\norm{f_0-f_1}_1&\le|[0,1) \setminus L| \cdot |\RG(f_0)|< \tfrac\varepsilon{4\norm{f_0}_\infty}\cdot 2\norm{f_0}_\infty = \tfrac\varepsilon2,\\
\norm{f_1-g}_1&\le \norm{f_1-g}_\infty < \tfrac\varepsilon4,
}
which, by the triangle inequality, yields
$\norm{f-g}_1< \varepsilon$.
Moreover we have a series of inclusions $\RG (g) \subseteq \RG (f_1) \subseteq \RG (f_0) \subseteq \RG (f)$.
Thus $g \in B_\varepsilon^\K(f)$ and, as a trivial consequence, $B_\varepsilon^\K(f) \ne \emptyset$.
\end{proof}

We now show some sort of a triangle inequality for $B_\varepsilon$.
\begin{lem}[Triangle inequality]\label{lem:triangle}
Let $I,J \subset \R$ be intervals, $f \in \calL(J,I)$, and $\delta,\varepsilon \in (0,+\infty)$ and consider an extended neighborhood
\Eq{*}{
\beta_\varepsilon(f):=\big\{ g \in \calL(J,I) \colon | \RG(g) \setminus \RG(f)|<\varepsilon\text{ and } \norm{f-g}_1 < \varepsilon\big\}.
}
Then $B_\varepsilon^\K(f)\subset \beta_\varepsilon(f)$ and 
\Eq{E:TI}{
\bigcup_{g \in \beta_\varepsilon(f)} B_\delta^\K(g) \subseteq B_{\varepsilon+\delta}^\K(f).
}

\end{lem}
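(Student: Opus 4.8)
The plan is to verify the two assertions in turn; neither requires more than unwinding the definitions together with the triangle inequality for $\norm{\cdot}_1$ and an elementary estimate on Lebesgue measures of range-differences. Throughout, fix the interval $J$, the function $f\in\calL(J,I)$, and $\delta,\varepsilon\in(0,+\infty)$ as in the statement.

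For the first inclusion $B_\varepsilon^\K(f)\subseteq\beta_\varepsilon(f)$ I would simply observe that the conditions defining $\beta_\varepsilon(f)$ are precisely those defining $B_\varepsilon^\K(f)$ with the clause ``$g$ is a $\K$-simple function'' deleted. Since every $\K$-simple function $J\to I$ takes finitely many values and $J$ is bounded, it is bounded, hence lies in $\calL(J,I)$; so membership in $B_\varepsilon^\K(f)$ already forces $g\in\calL(J,I)$, $\abs{\RG(g)\setminus\RG(f)}<\varepsilon$ and $\norm{f-g}_1<\varepsilon$, which is exactly $g\in\beta_\varepsilon(f)$.

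For \eq{E:TI} I would fix $g\in\beta_\varepsilon(f)$ and $h\in B_\delta^\K(g)$ (note $B_\delta^\K(g)$ is meaningful since $g\in\calL(J,I)$) and check the three defining properties of $B_{\varepsilon+\delta}^\K(f)$ for $h$. That $h$ is a $\K$-simple function from $J$ to $I$ is immediate from $h\in B_\delta^\K(g)$, the domain and codomain being unchanged. The $L^1$-estimate follows from the ordinary triangle inequality, $\norm{f-h}_1\le\norm{f-g}_1+\norm{g-h}_1<\varepsilon+\delta$. For the range condition the key point is the purely set-theoretic inclusion
\Eq{*}{
\RG(h)\setminus\RG(f)\subseteq\big(\RG(h)\setminus\RG(g)\big)\cup\big(\RG(g)\setminus\RG(f)\big),
}
valid because a point of $\RG(h)$ lying outside $\RG(f)$ either already lies outside $\RG(g)$ or lies in $\RG(g)\setminus\RG(f)$; taking Lebesgue measure and using monotonicity and subadditivity gives $\abs{\RG(h)\setminus\RG(f)}\le\abs{\RG(h)\setminus\RG(g)}+\abs{\RG(g)\setminus\RG(f)}<\delta+\varepsilon$. (Every set occurring here is a difference of two closed intervals, hence a finite union of intervals, so there is no measurability issue.) Combining the three verifications yields $h\in B_{\varepsilon+\delta}^\K(f)$; since $g\in\beta_\varepsilon(f)$ and $h\in B_\delta^\K(g)$ were arbitrary, \eq{E:TI} follows.

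I do not expect a genuine obstacle here: the statement is a routine ``triangle inequality'', and the only step deserving a moment's attention is the displayed inclusion of range-differences, which is exactly what produces the additive constant $\varepsilon+\delta$ in the conclusion; everything else is the triangle inequality for $\norm{\cdot}_1$ and rereading the definition of $B_\varepsilon^\K$.
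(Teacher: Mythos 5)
Your proposal is correct and follows essentially the same route as the paper: the first inclusion by inspection of the definitions, and the second by combining the $L^1$ triangle inequality with the subadditivity estimate $\abs{\RG(h)\setminus\RG(f)}\le\abs{\RG(h)\setminus\RG(g)}+\abs{\RG(g)\setminus\RG(f)}$, which the paper uses implicitly and you spell out explicitly. Your version is in fact slightly more careful in keeping the inequalities strict, as required by the definition of $B_{\varepsilon+\delta}^\K(f)$.
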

\begin{proof}
The inclusion $B_\varepsilon^\K(f)\subset \beta_\varepsilon(f)$ is obvious. 

Now take $g \in \beta_\varepsilon(f)$ and $h \in B_\delta^\K(g)$ arbitrarily. 
 Then $h$ is $\K$-simple and
 \Eq{*}{
 \big|\Rg(h) \setminus \Rg(f)\big|&\le 
 \big|\Rg(g) \setminus \Rg(f)\big|+ \big|\Rg(h) \setminus \Rg(g)\big| \le \varepsilon+\delta;\\
\norm{f -h}_1 &\le 
 \norm{f -g }_1+ \norm{g-h}_1 \le \varepsilon+\delta,\\
 }
 which proves $h \in B_{\varepsilon+\delta}(f)$. Since $h$ was an arbitrary element in $B_\delta(g)$ we get $B_\delta(g) \subset B_{\varepsilon+\delta}(f)$.
 Finally, as $g$ was an arbitrarily element in $\beta_\varepsilon(f)$, we obtain desired inclusion \eq{E:TI}.
\end{proof}

\subsection{Canonical embeddings} For a bounded interval $J \subset \R$ we define its \emph{cannonical embedding} $\kappa_J$ as the unique affine and strictly increasing function $\kappa_J \colon J \to [0,1]$ with $\kappa_J(J) \supset (0,1)$. Furthermore for a function $f$ define on a subinterval of $I$ we set $\kappa_f:=\kappa_{\Dom(f)}^{-1}$.

Then $\kappa_f$ in the bijection between a unit interval and $\Dom(f)$. Furthermore $\kappa_f$ is a homeomorphism.

\subsection{Lower and upper means}
By the definition almost all functions in a sequence which is convergent in $\tau=\tau_\K$ are $\K$-simple. Thus, for a $\K$-weighted mean $\M$ on $I$, we set functional means $\LM,\,\UM \colon \calL(I) \to I$ by
\Eq{*}{
\LM(f)&:=\liminf_{g \tauto f} \M g(x)dx= \lim_{\varepsilon \to 0} \:\:\inf \big\{ \M g(x)dx \colon g \in B_\varepsilon(f \circ \kappa_f)\big\}\:, \\
\UM(f)&:=\limsup_{g \tauto f} \M g(x)dx=\lim_{\varepsilon \to 0} \:\sup \big\{ \M g(x)dx \colon g \in B_\varepsilon(f \circ \kappa_f)\big\}\:. 
}

Observe that whenever $f$ is a $\K$-simple function then $f \circ \kappa_f \in B_\varepsilon(f\circ \kappa_f)$ for all $\varepsilon>0$, thus $\M^L(f) \le \M f(x)dx \le \M^U(f)$. Therefore whenever $\M^L(f)=\M^U(f)$ for some function $f \in \calL(I)$ then we say that $\M^0(f)$ is well-defined for a function $f$, and define $\M^0(f):=\LM(f)$ or, equivalently, $\M^0(f):=\UM(f)$. Then $\M^0(f)=\M f(x)dx$ for every $\K$-simple function $f$ such that $\M^0(f)$ is well-defined. Obviously $\M^0$ is continuous in the topology $\tau$ in its domain. It implies that if $\M^0$ is integral-type then it is also continuous in $L^1 \cap L^\infty$.

Furthermore, by the definition of $\kappa$, for $g:=f\circ \kappa_f$ we have $g=g\circ\kappa_g$, and thus the mapping $f \mapsto f \circ \kappa_f$ is a projection which maps $\calL(I)$ onto the sum of four sets, i.e. 
\Eq{Projset}{
\Pi(I):=\calL\big((0,1),I\big) \cup \calL\big((0,1],I\big) \cup \calL\big([0,1),I\big) \cup \calL\big([0,1],I\big).
}
Thus the following lemma is easy to see.   

\begin{lem}\label{lem:EQAff}
 Let $\M$ be a $\K$-weighted mean of $I$, $f \in \mathcal{L}(I)$ and $\xi\colon \R \to \R$ be a strictly increasing affine function. Then
$\M^L(f)=\M^L(f\circ \xi)$
 and $\M^U(f)=\M^U(f\circ \xi)$.
\end{lem}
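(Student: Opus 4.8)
The plan is to reduce the claim to a statement about the $\K$-simple approximating functions and then invoke the behaviour of the integral notation $\Mm g(x)dx$ under affine substitutions. First I would recall that, by the definition of $\LM$ and $\UM$, the values $\M^L(f)$ and $\M^U(f)$ depend only on the set $B_\varepsilon(f\circ\kappa_f)$ for small $\varepsilon$; in particular $\M^L(f)=\M^L(f\circ\kappa_f)$ and similarly for $\M^U$, because $f$ and $f\circ\kappa_f$ have the same projection onto $\Pi(I)$ (the map $f\mapsto f\circ\kappa_f$ is idempotent). So it suffices to prove $\M^L(f)=\M^L(f\circ\xi)$ when $f$ is already defined on one of the four standard unit intervals; then the general case follows by composing with $\kappa_f$. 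Hence without loss of generality $\Dom(f)\subset[0,1]$.

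Next I would analyse how $B_\varepsilon^\K$ transforms under $\xi$. Writing $J=\Dom(f)$ and $J'=\xi^{-1}(J)=\Dom(f\circ\xi)$, the map $g\mapsto g\circ\xi$ is a bijection between $\K$-simple functions on $J$ and $\K$-simple functions on $J'$ (an affine bijection sends $\K$-intervals to $\K$-intervals and preserves the partition structure in the definition of $\K$-simple functions). It preserves ranges exactly, so the condition $|\RG(g)\setminus\RG(f)|<\varepsilon$ is untouched; and it scales the $L^1$-norm by the constant factor $c:=|\xi'|=|J|/|J'|$, so $\norm{f-g}_1 = c\,\norm{f\circ\xi - g\circ\xi}_1$. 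Consequently $g\circ\xi\in B_{\varepsilon}^\K(f)$ iff $g\in B_{\varepsilon/c}^\K(f\circ\xi)$ (after also noting $f\in B_\varepsilon(f\circ\kappa_f)$-type subtleties are void here since we are on a unit interval and are free to insert $f$ itself via $\mathcal B_\varepsilon$). Thus $\{g\circ\xi : g\in B_\varepsilon^\K(f\circ\xi)\} = B_{c\varepsilon}^\K(f)$, and taking infima/suprema and letting $\varepsilon\to0$ (the constant $c$ is harmless in the limit) we will get the claimed equalities, \emph{provided} the integral notation is invariant: $\Mm (g\circ\xi)(x)dx = \Mm g(x)dx$ for every $\K$-simple $g$. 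This last fact is exactly symmetry of $\M$ combined with its reduction/elimination axioms: $g$ and $g\circ\xi$ are equidistributed up to the scaling of measures, and $\Mm(\cdot)dx$ is built to be invariant under simultaneous affine rescaling of the partition, since the weights $(|D_1|,\dots,|D_n|)$ enter only through the nullhomogeneity axiom. So $\Mm g(x)dx$ depends only on the values of $g$ and the \emph{ratios} of the lengths $|D_i|$, which are unchanged by $\xi$.

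The main obstacle I anticipate is purely bookkeeping: matching up the four possible shapes of $\Dom(f)$ in $\Pi(I)$ and making sure that $\xi$ maps the relevant half-open or closed unit interval correctly, and that replacing $f$ by $f\circ\kappa_f$ at the start does not lose generality. One clean way to sidestep this is to first reduce to $f$ of the form $f\circ\kappa_f$ (using the idempotence remark above), observe $\xi$ then conjugates $\kappa_f$ to $\kappa_{f\circ\xi}$ so that $(f\circ\xi)\circ\kappa_{f\circ\xi} = f\circ\kappa_f$ as functions on the \emph{same} unit interval, and conclude immediately that the two defining limits coincide term by term — no scaling constant even appears. This reduces the whole lemma to the single sentence: the projection $f\mapsto f\circ\kappa_f$ is invariant under post-composition with strictly increasing affine $\xi$, which is checked directly from the definition of $\kappa$. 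I would present the proof in this second, shorter form, with the scaling computation relegated to a remark.
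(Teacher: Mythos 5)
Your preferred (second) form of the argument is correct and is essentially the paper's own (the paper leaves the proof as "easy to see" precisely because of the projection property $g=g\circ\kappa_g$): since $\kappa_{\Dom(f\circ\xi)}=\kappa_{\Dom(f)}\circ\xi$ by uniqueness of the increasing affine bijection onto $[0,1]$, one gets $\xi\circ\kappa_{f\circ\xi}=\kappa_f$ and hence $(f\circ\xi)\circ\kappa_{f\circ\xi}=f\circ\kappa_f$, so the defining limits for $\M^L$ and $\M^U$ coincide verbatim. One caution about the scaling computation you plan to relegate to a remark: for a proper subfield $\K\subsetneq\R$ a strictly increasing affine $\xi$ need \emph{not} map $\K$-intervals to $\K$-intervals (take $\K=\Q$ and $\xi(x)=\sqrt2\,x$), so $g\mapsto g\circ\xi$ is not a bijection between the respective families of $\K$-simple functions and the direct comparison of $B_\varepsilon^\K(f)$ with $B_{\varepsilon/c}^\K(f\circ\xi)$ breaks down; this is exactly why the definition routes everything through the canonical embedding, and why the short form of your proof is the one to keep.
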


Let us now show a simple example of these means. Its proof is easy and therefore omitted.
\begin{exa}
For all $f \in \calL(\R)$ we have
\Eq{*}{
{\min}^L(f)&=\inf(f)&\qquad
{\min}^U(f)&=\essinf(f)\\
{\max}^L(f)&=\esssup(f)&\qquad
{\max}^U(f)&=\sup(f)
}
\end{exa}

There is the serious issue beyond this problem. Namely we it is not true in general that if two functions are equal almost everywhere then corresponding lower (or upper) means are equal. On the other hand, our main goal is to generalize weighted means in the integral-type flavor. Luckily, it turns out that (in most cases) this problem has the same background -- i.e. the difference between ``regular'' and essential supremum or infimum. In order to avoid this problem we restrict our consideration to the family 
where this value are the same. This motivates us to define
\Eq{*}{
\calL_0(I):= \big\{ f\in \calL(I) \colon \esssup(f)=\sup(f)\text{ and }\essinf(f)=\inf(f)\big\}.
}

Now, for all $f\in \calL_0(I)$ we obviously have $\RG(f^*)=\RG(f)$. This property plays a key role in the symmetry of integral means. We also define $\calL_0(J,I):=\calL(J,I)\cap \calL_0(I)$.

Now we are in the position the establish the initial results for $\LM$ and $\UM$.
\begin{prop}
For every $\K$-weighted mean $\M$ on $I$ both $\LM$ and $\UM$ are functional means on $I$ and $\LM \le \UM$.
\end{prop}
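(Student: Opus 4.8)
The plan is to verify the three claims—that $\LM$ is a functional mean, that $\UM$ is a functional mean, and that $\LM\le\UM$—essentially directly from the definitions, using Lemma~\ref{lem:INT1} to guarantee nonemptiness of the relevant index sets. First I would reduce everything to the case where the domain is one of the four canonical intervals in $\Pi(I)$: by Lemma~\ref{lem:EQAff} we have $\LM(f)=\LM(f\circ\kappa_f)$ and $\UM(f)=\UM(f\circ\kappa_f)$, and since $f\circ\kappa_f$ ranges over exactly the same values as $f$, the range condition $\iM(f)\in\RG(f)$ for $f$ is equivalent to the same condition for $f\circ\kappa_f$. So it suffices to treat $f$ defined on a (sub)interval of $[0,1]$, where $\Dom(f)$ is $\K$-simple (up to the negligible endpoint issue, which one handles by noting $\RG$, $\|\cdot\|_1$, and $\K$-simplicity are all unaffected by adding or removing endpoints).

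The key point is that the infimum and supremum defining $\LM(f)$ and $\UM(f)$ are taken over the sets $B_\varepsilon(f\circ\kappa_f)$, which by Lemma~\ref{lem:INT1} are nonempty for every $\varepsilon>0$ (here one uses that $\RG(f)$ may be unbounded, but then one first truncates: actually $\RG(f)=[\inf f,\sup f]$ need not be bounded, so I would pass to $f_n:=\max(\min(f,n),-n)$ or simply observe that the mean value property $\M g(x)dx\in\RG(g)\subseteq\RG(f)$ already forces all the quantities to lie in $\RG(f)$, so boundedness of $\RG(f)$ is not needed for the mean-value conclusion, only Lemma~\ref{lem:INT1}'s hypothesis requires care). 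For each $g\in B_\varepsilon(f\circ\kappa_f)$ we have $g$ is $\K$-simple and $\RG(g)\subseteq\RG(f\circ\kappa_f)=\RG(f)$, so by the mean value property of the $\K$-weighted mean $\M$ (inherited via the identity $\M g(x)dx=\M((g|_{D_1},\dots,g|_{D_n}),(|D_1|,\dots,|D_n|))$ and the mean value axiom), we get $\M g(x)dx\in\RG(g)\subseteq\RG(f)$. Hence every value over which we take $\inf$/$\sup$ lies in the closed interval $\RG(f)$; therefore $\inf\{\dots\},\sup\{\dots\}\in\RG(f)$, and passing to the limit $\varepsilon\to0$ (the infimum is nondecreasing in $\varepsilon$ shrinking, the supremum nonincreasing—actually as $\varepsilon\downarrow0$ the sets $B_\varepsilon$ shrink, so $\inf$ is nondecreasing and $\sup$ is nonincreasing, hence the limits exist as monotone limits) keeps us inside the closed interval $\RG(f)$. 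This shows $\LM(f),\UM(f)\in\RG(f)$, i.e.\ both are functional means on $I$ with domain all of $\calL(I)$.

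For the inequality $\LM\le\UM$: for each fixed $\varepsilon>0$ the set $B_\varepsilon(f\circ\kappa_f)$ is nonempty by Lemma~\ref{lem:INT1}, so $\inf\{\M g(x)dx:g\in B_\varepsilon(f\circ\kappa_f)\}\le\sup\{\M g(x)dx:g\in B_\varepsilon(f\circ\kappa_f)\}$ trivially; letting $\varepsilon\to0$ and using that both sides converge (monotonically) preserves the inequality, giving $\LM(f)\le\UM(f)$.

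I do not expect any deep obstacle here—the statement is a bookkeeping consequence of the definitions plus Lemma~\ref{lem:INT1}. The one place requiring mild care is the handling of unbounded $\RG(f)$ and the endpoint/domain normalization via $\kappa_f$ and $\Pi(I)$: Lemma~\ref{lem:INT1} is stated for $I$ bounded, so to apply it when $I$ (or $\RG(f)$) is unbounded I would either invoke the already-noted fact that only $f\in\calL(I)$—hence $f$ integrable on a bounded domain, so $\RG(f)$ a genuine interval—is needed together with an approximation/truncation argument, or simply restrict the nonemptiness claim to the bounded interval $\RG(f)\cap[\inf f,\sup f]$ when finite and note that the $\K$-simple approximants in the proof of Lemma~\ref{lem:INT1} have range inside $\RG(f_0)\subseteq\RG(f)$ regardless. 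That is the only subtlety; the rest is a direct application of the mean value axiom termwise.
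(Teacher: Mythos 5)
Your proof is correct and follows essentially the same route as the paper: nonemptiness of $B_\varepsilon$ via Lemma~\ref{lem:INT1}, the mean value axiom to locate each $\M g(x)dx$, and a monotone passage to the limit $\varepsilon\to0$. One tiny imprecision (shared with the paper's own write-up): membership in $B_\varepsilon(f)$ only guarantees $|\RG(g)\setminus\RG(f)|<\varepsilon$, not $\RG(g)\subseteq\RG(f)$, so the values $\M g(x)dx$ a priori lie in $[\inf f-\varepsilon,\sup f+\varepsilon]$ rather than in $\RG(f)$; the conclusion is unaffected since the limit $\varepsilon\to0$ lands in the closed interval $\RG(f)$.
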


\begin{proof}
 By Lemma~\ref{lem:INT1} we know that $A_\varepsilon:=\big\{ \M g(x)\:dx \colon g \in B_\varepsilon(f)\big\}$ is a nonempty subset of $\RG(f)$ for all $\varepsilon>0$. Therefore $\inf f \le \inf A_\varepsilon \le \sup A_\varepsilon \le  \sup f$. In the limit case $\varepsilon \to 0$ we obtain  $\inf f\le \LM(f)\le \UM(f)\le \sup f$. 
\end{proof}


Now we show some result concerning symmetry of means
\begin{lem}\label{lem:SymIN}
 Let $\M$ be a symmetric \mbox{$\K$-weighted} mean on $I$,
 and $f \in \calL_0(I)$. Then
\Eq{*}{
\M^L(f^*)\le\M^L(f)
\qquad \text { and } \qquad
\M^U(f^*)\ge\M^U(f).
}
In particular $\M^L(f^*)=\M^U(f^*)$ implies $\M^L(f)=\M^U(f)$.
\end{lem}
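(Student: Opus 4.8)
The plan is to establish the two inequalities separately; by the definition of $\M^0$ (that it is well-defined precisely when $\M^L$ and $\M^U$ agree), the final statement $\M^L(f^*)=\M^U(f^*)\Rightarrow\M^L(f)=\M^U(f)$ follows immediately from them, since $\M^L(f)\le\M^U(f)$ always holds and we will have sandwiched both between $\M^L(f^*)$ and $\M^U(f^*)$. By Lemma~\ref{lem:EQAff} we may compose with the canonical embedding and assume without loss of generality that $\Dom(f)\in\Pi(I)$, say $\Dom(f)=[0,1)$, so that $f^*$ is defined on the same interval $[0,1)$; note $\RG(f^*)=\RG(f)$ holds here precisely because $f\in\calL_0(I)$, which is what makes the $\K$-simple neighbourhoods comparable.

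First I would prove $\M^L(f^*)\le\M^L(f)$. The idea is that it suffices to produce, for each $\K$-simple $g\in B_\varepsilon^\K(f)$, a $\K$-simple function $g^\sharp\in B_{C\varepsilon}^\K(f^*)$ with $\M g^\sharp(x)\,dx=\M g(x)\,dx$; then $\inf\{\M h : h\in B_{C\varepsilon}^\K(f^*)\}\le\inf\{\M g : g\in B_\varepsilon^\K(f)\}$, and letting $\varepsilon\to0$ gives the claim. The natural candidate for $g^\sharp$ is the decreasing rearrangement $g^*$ of $g$: since $\M$ is symmetric and $g$ is $\K$-simple with rational (or $\K$-valued) breakpoints, $g^*$ is again $\K$-simple and equidistributed with $g$, hence $\M g^*(x)\,dx=\M g(x)\,dx$ by symmetry in the sense defined via equidistributed $\K$-simple functions. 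It then remains to check $g^*\in\beta_{C\varepsilon}(f^*)$, i.e. $\norm{f^*-g^*}_1$ is small and $|\RG(g^*)\setminus\RG(f^*)|$ is small; the first uses the standard contractivity of rearrangement in $L^1$, $\norm{f^*-g^*}_1\le\norm{f-g}_1$, and the second is immediate since $\RG(g^*)=\RG(g)$ and $\RG(f^*)=\RG(f)$. Using Lemma~\ref{lem:triangle} (triangle inequality) one upgrades membership in $\beta$ to membership in $B^\K$ up to a controlled enlargement of the radius, which supplies the constant $C$.

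The inequality $\M^U(f^*)\ge\M^U(f)$ is proved by the symmetric argument, replacing $\inf$ by $\sup$ and the direction of the comparison accordingly: every $\K$-simple $g\in B_\varepsilon^\K(f)$ yields $g^*\in B_{C\varepsilon}^\K(f^*)$ with the same $\M$-value, so $\sup\{\M h : h\in B_{C\varepsilon}^\K(f^*)\}\ge\sup\{\M g : g\in B_\varepsilon^\K(f)\}$, and $\varepsilon\to0$ finishes it.

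The main obstacle I anticipate is the bookkeeping needed to guarantee that $g^*$ is genuinely $\K$-simple and that it lands in $B_{C\varepsilon}^\K(f^*)$ rather than merely in the larger set $\beta_{C\varepsilon}(f^*)$; this is exactly what Lemma~\ref{lem:triangle} is there to handle, but one must be careful that the extra $\varepsilon$-slack from the triangle inequality, combined with the $L^1$-contractivity $\norm{f^*-g^*}_1\le\norm{f-g}_1$ (which should be cited or quickly justified), still tends to $0$. A secondary subtlety is the role of $\calL_0(I)$: without it $\RG(f^*)$ could be strictly smaller than $\RG(f)$ and the range-condition in the definition of $B_\varepsilon^\K$ could fail to be controlled in the right direction, so the hypothesis $f\in\calL_0(I)$ must be used precisely at the step where $\RG(f^*)=\RG(f)$ is invoked.
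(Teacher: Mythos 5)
Your proof is correct and follows essentially the same route as the paper: send each $g\in B_\varepsilon^\K(f)$ to its decreasing rearrangement $g^*$, use symmetry of $\M$ on equidistributed $\K$-simple functions to preserve the value $\Mm g(x)\,dx$, check via $\RG(g^*)=\RG(g)$, $\RG(f^*)=\RG(f)$ (here $f\in\calL_0(I)$ is used) and $L^1$-nonexpansivity of rearrangement that $g^*$ lands in a controlled neighbourhood of $f^*$, and then sandwich $\M^L(f^*)\le\M^L(f)\le\M^U(f)\le\M^U(f^*)$. The only differences are cosmetic: the paper derives the $L^1$ bound by routing through the $L^2$-nonexpansivity and H\"older, obtaining $\norm{f^*-g^*}_1\le\sqrt{|I|\varepsilon}$ rather than your direct $\norm{f^*-g^*}_1\le\norm{f-g}_1$, and your appeal to Lemma~\ref{lem:triangle} is unnecessary since $g^*$ is itself $\K$-simple and therefore belongs to $B_{C\varepsilon}^\K(f^*)$ directly by definition.
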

\begin{proof}
Take $f \in \Pi(I)$, $\varepsilon\in(0,+\infty)$ and $g \in B_\varepsilon(f)$. Then, as $g$ is $\K$-simple we obtain that so is its rearrangement $g^*$. Furthermore as the rearrangement preserves the range we get $| \RG(g^*)\setminus \RG(f^*)| <\varepsilon$. 

 Since monotone rearrangement is $1$-Lipschitz selfmapping of $L^2$,
in view of H\"older inequality, we get
 \Eq{*}{
\norm{f^*-g^*}_1^2\le 
\norm{1}_2^2 \norm{f^*-g^*}_2^2 \le  \norm{f-g}_2^2 \le \norm{f-g}_\infty \norm{f-g}_1\le |I|\varepsilon.
 }
 Thus $\norm{f^*-g^*}_1 \le \sqrt{|I|\varepsilon}$. Consequently $g^* \in B_{(|I|\varepsilon)^{1/2}}(f^*)$, and so
\Eq{*}{
\LM(f)
&= \lim_{\varepsilon \to 0} \:\:\inf \big\{ \M g(x)dx \colon g \in B_\varepsilon(f)\big\}
= \lim_{\varepsilon \to 0} \:\:\inf \big\{ \M g^*(x)dx \colon g \in B_\varepsilon(f)\big\}\\
&\ge \lim_{\varepsilon \to 0} \:\:\inf \big\{ \M g^*(x)dx \colon g^* \in B_{(|I|\varepsilon)^{1/2}}(f^*)\big\}\ge\LM(f^*).
}
The second inequality is analogous. Thus in view of $\M^L\le \M^U$ we get 
\Eq{*}{
\M^L(f^*)\le\M^L(f) \le \M^U(f) \le \M^U(f^*),
}
which easily implies the remaining part.
\end{proof}

We are now going to show the condition which is sufficient to guarantee that $\M^L=\M^U$.
To this end, recall that $X$ is \emph{deeply contained} in $Y$ if the closure of $X$ is a subset of the interior of $Y$; we denote it briefly as $X \Subset Y$.
Then we have
\Eq{*}{
\interior \calL(J,I)=\big\{f \in \calL(J,I) \colon \RG(f)\Subset I \big\}.
}

Remarkably, this corollary is not necessary true for in $I^n \times W_n(\K)$. For example $\max$ is a convex $\R$-weighted mean on $\R$ which is not continuous in its weights.

Now, for intervals $I,J \subset \R$ and $f,f_1,f_2,\dots \in \mathcal{L}(J,I)$ we define monotone convergence
\Eq{*}{
f_n \nearrow f :\iff f_n \to f \text{ pointwise and } f_n \le f_{n+1} \text{ for all }n \in \N;\\
f_n \searrow f :\iff f_n \to f \text{ pointwise and } f_n \ge f_{n+1} \text{ for all }n \in \N.\\
}
Using this convergence we prove that lower and upper means admit a sort of semicontinuity. 
\begin{lem} \label{lem:38}
 Let $\M$ be a symmetric, monotone, $\R$-weighted mean on $I$. 
 Then, for every intervals $J \subset \R$, $I_0 \Subset I$, both $\M^L$ and $\M^U$ are symmetric on $\calL_0(J,I_0)$ and
for a given sequence of nonincreasing functions $f,f_1,f_2,\dots \in \calL_0(J,I_0)$ we have 
 \begin{enumerate}
 \item if $f_n \nearrow f$  then $\lim\limits_{n\to \infty} \M^L (f_n) = \M^L(f)$;
 \item if $f_n \searrow f$ then $\lim\limits_{n\to \infty} \M^U (f_n)=\M^U(f)$.
 \end{enumerate}
 Moreover if $\inf_{x \in J} |f_n(x)-f(x)| >0$ for all $n\in\N$ then
\Eq{*}{
\lim\limits_{n\to \infty} \M^L (f_n)=\lim\limits_{n\to \infty} \M^U (f_n)=\M^L(f) \text{ whenever }f_n \nearrow f;\\
\lim\limits_{n\to \infty} \M^L (f_n)=\lim\limits_{n\to \infty} \M^U (f_n)=\M^U(f) \text{ whenever }f_n \searrow f.
}

%
\end{lem}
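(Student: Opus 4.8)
The plan is to establish monotonicity of $\M^L$ and $\M^U$ first, then exploit a squeezing argument. Since $\M$ is monotone, for $\K$-simple $g_1 \le g_2$ on a common domain we have $\M g_1(x)dx \le \M g_2(x)dx$; passing to $\liminf$ / $\limsup$ over $\tau$-convergent sequences (and using Lemma~\ref{lem:triangle} to compare the $\varepsilon$-neighborhoods, together with Lemma~\ref{lem:EQAff} to reduce to the canonical domain) gives that $\M^L$ and $\M^U$ are monotone on $\calL_0(J,I_0)$, i.e. $f\le g$ implies $\M^L(f)\le\M^L(g)$ and $\M^U(f)\le\M^U(g)$. Symmetry on $\calL_0(J,I_0)$ follows from Lemma~\ref{lem:SymIN}: for $f\in\calL_0$ we have $\RG(f^*)=\RG(f)$, so $\M^L(f)=\M^L(f^*)$ and $\M^U(f)=\M^U(f^*)$, and since equidistributed functions share the same decreasing rearrangement, the values agree; one must check that $\calL_0(J,I_0)$ is closed under equidistribution and under the rearrangement, which is immediate because $\esssup$, $\essinf$, $\inf$, $\sup$ and $\RG$ only depend on the distribution for the first two and are preserved by $f\mapsto f^*$ on $\calL_0$.

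For part (1), assume $f_n \nearrow f$ with all functions in $\calL_0(J,I_0)$. By monotonicity of $\M^L$ the sequence $\M^L(f_n)$ is nondecreasing and bounded above by $\M^L(f)$, so $L:=\lim_n \M^L(f_n) \le \M^L(f)$. For the reverse inequality I would fix $\varepsilon>0$ and a $\K$-simple $h \in B_\varepsilon(f)$ with $\M h(x)dx$ close to $\M^L(f)$; the point is that $h$ is $\K$-simple, hence takes finitely many values and $\RG(h)$ is bounded away from the endpoints of $I$ (after shrinking to $I_0$), so $\min(h, f_n)$ is again $\K$-simple, lies in $\calL_0$, and converges monotonically up to $\min(h,f)=h$ on the set where $h\le f$. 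Using $\norm{f_n-f}_1 \to 0$ (Lebesgue monotone/dominated convergence, valid since $f_1\in\calL$ and $I_0$ is bounded) together with the fact that $\RG(\min(h,f_n))\subseteq\RG(h)\cup\RG(f_n)$, one shows $\min(h,f_n)\in B_{2\varepsilon}(f_n)$ for $n$ large, whence $\M^L(f_n) \le \M \min(h,f_n)(x)dx$ is eventually within $o(1)$ of $\M h(x)dx$ by $\tau$-continuity of $\M^0$ on $\K$-simple functions — actually more carefully, $\min(h,f_n) \tauto h$ is false in general, so instead I would note $\min(h,f_n)$ is $\K$-simple and bound $\M^L(f_n)$ directly by $\M \min(h,f_n)(x)dx \le \M h(x)dx$ using monotonicity of the discrete mean, giving $L \le \limsup_n \M\min(h,f_n)(x)dx \le \M h(x)dx \le \M^L(f)+\varepsilon$. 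Letting $\varepsilon\to 0$ yields $L \ge \M^L(f)$. Part (2) is the mirror image with $\max(h,f_n)$ and $\limsup$.

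For the final ``moreover'' clause, suppose additionally $c_n:=\inf_{x\in J}|f_n(x)-f(x)|>0$. The idea is that now each $f_n$ is uniformly separated from $f$, so when we approximate $f$ by a $\K$-simple $h$ within $\ell^1$-distance much smaller than $c_n$, the function $h$ automatically satisfies $h \ge f_n$ (in the $f_n\nearrow f$ case) modulo a small-measure exceptional set, and one can clean this up to an honest $\K$-simple function trapped between $f_n$ and $f$. Concretely, fix $n$; choose $\K$-simple $h\in B_\delta(f)$ with $\delta$ small; then $\tilde h := \max(f_n, \min(h,f))$... but $f$ need not be $\K$-simple, so instead take $h' := \max(f_n, h)$ which is $\K$-simple, satisfies $f_n \le h' $, and $\norm{h'-f}_1 \le \norm{h-f}_1 = \delta$ while $\norm{h'-f_n}_1 \le \norm{h-f}_1 + \norm{f-f_n}_1$. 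This shows $h'\in B_{\varepsilon}(f_n)$ and $h'\in B_\varepsilon(f)$ simultaneously for suitable $\varepsilon$, so both $\M^L(f_n)$ and $\M^U(f_n)$ are squeezed toward $\M h'(x)dx$, which in turn approximates $\M^L(f)$; taking $n\to\infty$ and $\delta\to 0$ collapses $\lim\M^L(f_n)$, $\lim\M^U(f_n)$ and $\M^L(f)$ to a common value. The $f_n\searrow f$ case uses $h'=\min(f_n,h)$ and lands on $\M^U(f)$.

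The main obstacle I anticipate is the bookkeeping around $\RG$: the definition of $B_\varepsilon$ controls $|\RG(g)\setminus\RG(f)|$, and while $\min$/$\max$ with $f_n$ only shrinks or slightly enlarges ranges, one must verify at each step that the modified $\K$-simple functions stay in the correct neighborhoods with the correct (doubled) $\varepsilon$, which is exactly where Lemma~\ref{lem:triangle} and the hypothesis $I_0\Subset I$ (keeping ranges in a fixed compact subinterval) are essential. The second delicate point is justifying $\norm{f_n-f}_1\to 0$ and the interchange of limits $\varepsilon\to 0$ and $n\to\infty$; this is routine given boundedness of $J$ and $I_0$ but must be done in the right order.
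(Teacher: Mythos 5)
Your proposal has several genuine gaps, and the most serious one is structural: the central device of forming $\min(h,f_n)$ and $\max(f_n,h)$ and feeding the result to $\M(\cdot)\,dx$ does not work, because these functions are not $\K$-simple. The $f_n$ are arbitrary nonincreasing integrable functions; on each interval where $h$ is constant, $\min(h,f_n)$ coincides with $f_n$ on a subinterval and is therefore generally non-constant there, so the expression $\M\min(h,f_n)(x)\,dx$ is simply undefined. Moreover, in part (1) your chain of inequalities ends with $L\le \M h(x)dx\le \M^L(f)+\varepsilon$, which is the inequality you already had from monotonicity; it cannot yield the needed bound $L\ge \M^L(f)$. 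The missing idea is a neighborhood inclusion rather than a pointwise comparison: one shows that for large $n$ the function $f_n$ lies in the extended neighborhood $\beta_\delta(f)$ (this is where nonincreasingness is really used, via a P\'olya/Dini-type argument controlling both $\norm{f_n-f}_1$ and the range defect $|\RG(f_n)\setminus\RG(f)|$), and then Lemma~\ref{lem:triangle} gives $B_\delta(f_n)\subseteq B_{2\delta}(f)$, hence $\M^L(f_n)\ge \inf\{\M g(x)dx:g\in B_{2\delta}(f)\}\to\M^L(f)$. Your preamble cites Lemma~\ref{lem:triangle} but the proof never uses it in this (essential) way.

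Two further steps are wrong as stated. First, symmetry does not follow from Lemma~\ref{lem:SymIN} alone: that lemma gives only the one-sided inequalities $\M^L(f^*)\le\M^L(f)$ and $\M^U(f^*)\ge\M^U(f)$, whereas you assert equality; closing the gap requires the full strength of the ``moreover'' part (applied to $f_n\nearrow f$ and $f_n^*\nearrow f^*$ with uniform separation) to squeeze $\M^L(f^*)\le\M^L(f)\le\M^U(f)\le\M^U(f^*)$ into a single value. Second, in the ``moreover'' clause, exhibiting one function $h'$ lying simultaneously in $B_\varepsilon(f_n)$ and $B_\varepsilon(f)$ bounds the infimum over $B_\varepsilon(f_n)$ from above and the supremum from below, but the claim requires bounding $\M^U(f_n)$ --- a supremum --- from above by something tending to $\M^L(f)$. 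That forces you to compare \emph{every} $g\in B_\varepsilon(f_n)$ against a genuinely $\K$-simple function trapped between $f_n$ and $f$ (which must be constructed from the gap structure of the nonincreasing $f$ using the uniform separation $f_n+3\delta\le f$), and then invoke monotonicity of $\M$ on simple functions; ``cleaning up on a small-measure exceptional set'' is exactly the kind of step that fails here, since $\M g(x)dx$ is not controlled by $\norm{g}_1$ (consider $\M=\min$).
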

\begin{proof}
We prove only the case $f_n\nearrow f$ as the second one is completely analogous.


Obviously, as $\M$ is monotone we obtain
$\M^L(f_n)\le \M^L(f)$ for all $n \in \N$. We prove the inequality 
\Eq{AX1}{
\lim_{n\to \infty} \M^L(f_n)\ge \M^L(f).
}

Take $\alpha\in \N$ such that $I_1:=[\inf I_0 -2^{-\alpha},\sup I_0+2^{-\alpha}] \Subset I$ and a sequence of function $(g_n)_{n=1}^\infty$ from $\calL(J,I_1)$ given by $g_n:=f_n-\frac{1}{2^{n+\alpha}}$. Then we have $\M_L(f_n)\ge \M_L(g_n)$ for all $n \ge 1$ and $g_n \nearrow f$. Whence it sufficient to show \eq{AX1} with $(f_n)_{n=1}^\infty$ replaced by $(g_n)_{n=1}^\infty$. Thus (substituting $g_n$ to $f_n$) from now on one can additionally assume 
\Eq{XT}{
f_n+2^{-n-\alpha}\le f_{n+1} \text{ for all } n\in \N.
}
Now define the set of gaps
\Eq{*}{
\Gamma_i:=\big\{x_0 \in [0,1] \colon \limsup_{x \to x_0} f(x) - \liminf_{x \to x_0} f(x) \ge 2^{-i-\alpha} \big\}.
}

Since $f$ is nonincreasing and bounded, each $\Gamma_i$ is finite. Therefore the set $(0,1) \setminus \Gamma_i$  consists of finitely many open intervals, say $J_{i,1},\dots,J_{i,k_i}$. Then 
\Eq{*}{
\limsup_{x \to x_0} f(x) -2^{-i-\alpha} <
\liminf_{x \to x_0} f(x)\qquad \text{ for all }i\in\N\text{, }l\in \{1,\dots,k_i\}\text{ and }x \in J_{i,l}.
}

In particular each for each pair $(i,l)$ as above, set
\Eq{*}{
T_{i,l}:=\{(x,y)\in J_{i,l} \times I \colon f(x)-2^{-i-\alpha}< y < f(x)\}
}
 is a connected and open subset of $J_{i,l} \times [\inf I_0 - 2^{-i-\alpha},\sup I_0]$. Now we sum-up $T_{i,l}$ over $l$ and define single-parameter family
 \Eq{*}{
 T_i :=\cl \Big(\bigcup_{l=1}^{k_i} T_{i,l} \Big) \subset [0,1] \times [\inf I_0 - 2^{-i-\alpha}, \sup I_0] \subset [0,1] \times I\qquad (i \in\N).
 }\

Furthermore since the mapping $n \mapsto f_n$ is nondecreasing and each $f_n$ is monotone we obtain (adapting the classical P\'olya result \cite{Pol20}) that for all $i \in \N$ and $l \in\{1,\dots,k_i\}$ there exists $n_{i,l}\in\N$ such that $\Graph(f_n|_{J_{i,l}}) \subset T_{i,l}$ for all $n>n_{i,l}$.
Thus with $n_i:=\max(n_{i,1},\dots,n_{i,k_i})$ we obtain $\Graph(f_n) \subset T_i$  for all $i \in \N$ and $n>n_i$. Consequently, $f_n \in \beta_{2^{-i-\alpha}}(f)$ for all $n>n_i$. Thus by the triangle inequality expressed in Lemma~\ref{lem:triangle} we get
\Eq{*}{
B_{2^{-i-\alpha}}(f_n) \subseteq B_{2^{-i-\alpha+1}}(f) \qquad \text{ for all }i \in \N\text{ and }n>n_i,
}
which by the definition of $\M^L$ implies \eq{AX1} and completes the proof of the first condition (1). The first condition (2) is analogous.

Now we proceed to the moreover part. More precisely we show that, under the additional assumption, the inequality 
\Eq{DX1}{
\lim_{n \to \infty} \M^U(f_n)\le \M^L(f)
}
holds. To this end, fix $n \in \N$. Then there exists $\delta\in(0,+\infty)$ such that $f_n+3\delta\le f$. Let 
\Eq{*}{
Q:=\big\{(x,y) \in J \times I \colon f(x)-2\delta< y <f(x)-\delta\big\}.
}
Repeating the same argumentation as above, one can split $J$ to finitely many intervals $J_1,\dots,J_k$ such that each $Q \cap (J_i \times I)$ is connected. In particular there exists a function $g \colon J \to I$ such that $\Graph(g)\subset Q$ and $g$ is continuous on each $J_i$.

Since the family of simple functions is dense in a family of monotone functions, one can take an $\R$-simple function $\chi_n \colon J \to I$ such that $\norm{\chi_n-g}_\infty \le \delta$. Then 
\Eq{*}
{
\chi_n \le g+\delta\le (f-\delta)+\delta= f \quad\text{ and }\quad\chi_n \ge g-\delta\ge f-3\delta\ge f_n.
}

Thus, for every $n \in \N$ one can take an $\R$-simple function $\chi_n \colon J \to I$ with $f_n \le \chi_n\le f$. Then we have $\M^U(f_n)\le \M\chi_n(x)\:dx \le \M^L(f)$. In a limit case as $n$ tends to infinity this inequality yields \eq{DX1}. Binding \eq{AX1}, \eq{DX1} and the trivial property $\M^L \le \M^U$ we obtain the moreover part.

Finally we need to show that $\M^L$ and $\M^U$ are symmetric. To this end take a function $f \in \calL_0(J,I_0)$ arbitrarily. Then there exists a sequence $(f_n)_{n=1}^\infty$ of function in $\calL(J,I)$ such that $f_n \nearrow f$ and $\inf |f_n-f| \cdot \inf |f_n^*-f^*|>0$ for all $n \in \N$. Then Lemma~\ref{lem:SymIN} yields
\Eq{*}{
\lim\limits_{n\to \infty} \M^L (f_n^*)\le \lim\limits_{n\to \infty} \M^L (f_n)\le\lim\limits_{n\to \infty} \M^U (f_n) \le\lim\limits_{n\to \infty} \M^U (f_n^*),
}
which shows that $\M^L(f^*)\le \M^L(f)\le \M^L(f^*)$, i.e. $\M^L$ is symmetric on $\calL_0(J,I_0)$. The proof for $\M^U$ is analogous.
\end{proof}

Let us put the reader attention to the following important fact.
\begin{rem}
 The assumption $\inf_{x \in J} |f_n(x)-f(x)| >0$ for all $n\in\N$ in the moreover part of Lemma~\ref{lem:38} cannot be omitted unless $\M^L=\M^U$. Indeed, applying the moreover part to the constant sequence $f_n := f\in \calL(I)$ we immediately obtain $\M^L(f)=\M^U(f)$.
\end{rem}

\subsection{Sufficient conditions to define $\M^0$}
In this section we establish few sufficient conditions to validate the equality $\M^L=\M^U$. Our initial result is very easy in view of the definition of $\M^0$.
\begin{prop}\label{prop:suff}
 Let $\M$ be a $\K$-weighted mean on $I$. Assume that there exists a subset $X\subset \calL(I)$ which is open in $\tau_\K$, an integral mean $\Nm \colon X \to I$ which is continuous in $\tau_\K$ and
\Eq{*}{
\Nm(\chi)=\Mm\chi(x)\:dx\text{ for every }\K\text{-simple function }\chi \in X.
}
Then the mean $\M^0$ is well-defined on $X$ and $\M^0|_X=\Nm$.
%
\end{prop}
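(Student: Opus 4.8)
The statement to establish is Proposition~\ref{prop:suff}: given an open (in $\tau_\K$) set $X \subset \calL(I)$ carrying a $\tau_\K$-continuous integral mean $\Nm$ which agrees with the sum-type notation $\Mm\chi(x)\,dx$ on all $\K$-simple functions in $X$, we must show that $\M^0$ is well-defined on $X$ and equals $\Nm$ there. The plan is to fix $f \in X$ and show $\M^L(f) = \M^U(f) = \Nm(f)$ directly from the definitions of $\M^L,\M^U$ as a liminf/limsup of the sum-type values $\M g(x)\,dx$ over $g \tauto f$. The key observation is that, since $\Nm$ is an integral mean, $\Nm(f) = \Nm(f\circ\kappa_f)$ (the two functions agree a.e. after the canonical reparametrisation, once one passes through the affine bijection $\kappa_f$; strictly one also uses Lemma~\ref{lem:EQAff} to see that $\M^L,\M^U$ are likewise invariant under this reparametrisation), so it is harmless to assume $f = f\circ\kappa_f$, i.e. $f$ is defined on one of the four unit intervals in $\Pi(I)$.

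\textbf{Key steps.} First I would invoke openness of $X$ in $\tau_\K$: there is $\varepsilon_0 > 0$ with $\mathcal{B}_{\varepsilon_0}^\K(f) \subset X$. In particular, for every $\varepsilon \in (0,\varepsilon_0)$ the whole set $B_\varepsilon^\K(f)$ lies in $X$, and it is nonempty by Lemma~\ref{lem:INT1} (note $I$ may be taken bounded here, or one passes to $\RG(f) \Subset I'$ for a bounded subinterval; if $I$ is unbounded, replace $I$ throughout the argument by a bounded subinterval $I'$ with $\RG(f) \subset I'$, which does not affect any of the quantities involved since all competitors $g \in B_\varepsilon(f)$ have $\RG(g) \subset \RG(f) \cup N$ with $|N| < \varepsilon$). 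Second, for each such $\varepsilon$ and each competitor $g \in B_\varepsilon^\K(f)$, since $g$ is $\K$-simple and $g \in X$ we have by hypothesis $\M g(x)\,dx = \Nm(g)$. Third, take any sequence $g_n \tauto f$ with all $g_n$ $\K$-simple and $\Dom(g_n) = \Dom(f)$ (this normalisation is allowed by the remarks following the definition of $\tau_\K$); then eventually $g_n \in B_\varepsilon^\K(f) \subset X$ for every $\varepsilon$, and $g_n \to f$ in $\tau_\K$ with $f \in X$, so by $\tau_\K$-continuity of $\Nm$ we get $\Nm(g_n) \to \Nm(f)$, i.e. $\M g_n(x)\,dx \to \Nm(f)$. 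Since this holds for every such sequence, both the liminf and the limsup defining $\M^L(f)$ and $\M^U(f)$ equal $\Nm(f)$; hence $\M^0(f)$ is well-defined and equals $\Nm(f)$. Finally, I would remark that $f$ was arbitrary in $X$, giving $\M^0|_X = \Nm$.

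\textbf{Expected main obstacle.} The proof is, as the authors say, ``very easy'', so there is no deep obstacle; the only thing requiring care is matching the two slightly different formalisms — the sequential description of $\tau_\K$-convergence versus the $\varepsilon$-neighbourhood filter-base $(\mathcal{B}_\varepsilon^\K(f))_{\varepsilon>0}$ — and making sure the liminf/limsup in the definitions of $\M^L,\M^U$ (which are taken over the filter of $\tau$-neighbourhoods, equivalently over $g \in B_\varepsilon(f\circ\kappa_f)$ as $\varepsilon \to 0$) really are captured by the continuity of $\Nm$. The cleanest route is to argue with the $\varepsilon$-neighbourhoods directly: for $\varepsilon < \varepsilon_0$, continuity of $\Nm$ at $f$ in $\tau_\K$ means that for every $\eta > 0$ there is $\varepsilon_1 \in (0,\varepsilon_0)$ with $|\Nm(g) - \Nm(f)| < \eta$ for all $g \in \mathcal{B}_{\varepsilon_1}^\K(f)$; since on $\K$-simple $g$ we have $\Nm(g) = \M g(x)\,dx$ and $B_{\varepsilon_1}^\K(f)$ consists precisely of $\K$-simple functions, this forces $\sup\{\M g(x)\,dx : g \in B_{\varepsilon_1}(f)\}$ and $\inf\{\cdots\}$ to both lie within $\eta$ of $\Nm(f)$, and letting $\eta \to 0$ yields $\M^U(f) = \M^L(f) = \Nm(f)$. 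A secondary bookkeeping point is the reduction from a general $f$ to one in $\Pi(I)$ via $f \mapsto f\circ\kappa_f$, which uses Lemma~\ref{lem:EQAff} for $\M^L,\M^U$ and the integral-mean property of $\Nm$ for the right-hand side; this is routine but should be stated.
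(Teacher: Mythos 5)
Your argument is correct and is precisely the intended one (the paper omits the proof, stating only that it is ``very easy in view of the definition of $\M^0$''): openness of $X$ gives $B_{\varepsilon}^{\K}(f)\subset X$ for all small $\varepsilon$, continuity of $\Nm$ in $\tau_\K$ forces both $\inf$ and $\sup$ of $\{\M g(x)\,dx : g\in B_{\varepsilon}^{\K}(f)\}=\{\Nm(g) : g\in B_{\varepsilon}^{\K}(f)\}$ to within $\eta$ of $\Nm(f)$, and hence $\M^L(f)=\M^U(f)=\Nm(f)$. The only point needing the care you already flag is the reparametrisation $f\mapsto f\circ\kappa_f$ in the definition of $\M^L,\M^U$, which Lemma~\ref{lem:EQAff} and the affine invariance of the whole setup dispose of.
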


It is useful in a several particular cases. In order to apply it let us define a set \Eq{*}{
\calL_C(I):=\bigcup_{\substack{S \Subset I\\S \text{ interval}}} \calL_0(S).
} 

Adapting the well-known fact that $\calL(J,I)$ is a convex set and the convex (or concave) function is continuous in the interior of its domain, we can show the sufficient condition to validate the equality $\M_L=\M_U$ for convex (or concave) means.

\begin{thm}
 Let $\M$ be a symmetric, monotone and convex (concave) $\R$-weighted mean on $I$. Then $\M^0$ is a functional-type mean which is well-defined on $\calL_C(I)$ which is convex (concave) in its domain.
\end{thm}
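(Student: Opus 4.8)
The plan is to deduce the theorem from Proposition~\ref{prop:suff} by exhibiting the right open set $X$ and the right continuous integral mean $\Nm$ on it, and then verifying convexity/concavity of the resulting $\M^0$. I would take $X:=\interior\calL(J,I)=\{f\in\calL(J,I)\colon\RG(f)\Subset I\}$ for each bounded interval $J$ (and then union over $J$); this set is $\tau_\K$-open essentially by construction of $\tau_\K$, since the conditions defining membership in $\calL_C(I)$ are open in the $L^1$-plus-range topology. The candidate $\Nm$ on $X$ should be $\M^0$ itself — but to invoke the proposition I first need to know that $\M^0$ is \emph{well-defined} (i.e. $\M^L=\M^U$) on all of $X$, which is exactly the content being claimed. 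So the real work is to prove $\M^L=\M^U$ on each $\calL_0(S)$ with $S\Subset I$ an interval, and that this common value is continuous in $\tau_\K$.

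For the equality $\M^L=\M^U$ on $\calL_0(J,I_0)$ with $I_0\Subset I$, I would first reduce to monotone functions: given $f\in\calL_0(J,I_0)$, Lemma~\ref{lem:EQAff} lets me replace $f$ by $f\circ\kappa_f$ so the domain is a subinterval of $[0,1]$, and Lemma~\ref{lem:SymIN} reduces the equality $\M^L(f)=\M^U(f)$ to the equality $\M^L(f^*)=\M^U(f^*)$ for the decreasing rearrangement $f^*$, which is nonincreasing. So it suffices to prove $\M^L=\M^U$ for every nonincreasing $f\colon J\to I_0$. For such an $f$, pick two monotone sequences of nonincreasing functions in $\calL(J,I)$, one $f_n\nearrow f$ and one $h_n\searrow f$, arranged so that $\inf_{x}|f_n(x)-f(x)|>0$ and $\inf_x|h_n(x)-f(x)|>0$ for every $n$ (possible because $I_0\Subset I$ gives room on both sides — just take $f_n$ slightly below and $h_n$ slightly above $f$, e.g. staircase approximations pushed down/up by $2^{-n}$). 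Then the ``moreover'' part of Lemma~\ref{lem:38} applies to each sequence and gives $\M^L(f)=\M^U(f)$ directly. Note Lemma~\ref{lem:38} already establishes symmetry of $\M^L$ and $\M^U$ on $\calL_0(J,I_0)$, so no extra work is needed there.

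With $\M^0$ thus well-defined on $X$, continuity in $\tau_\K$ is automatic — the excerpt already remarks that $\M^0$ is continuous in $\tau$ on its domain (being squeezed between $\liminf$ and $\limsup$ of the same net). Hence Proposition~\ref{prop:suff}, applied with $\Nm:=\M^0$ (which is an integral mean: equality a.e. of two functions in $\calL_0$ forces equal essential sup/inf hence the same rearrangement up to null sets, so equal $\M^0$-values), yields that $\M^0$ is well-defined on $\calL_C(I)$. Finally, for the convexity (concavity) assertion: $\calL_C(I)$ is convex since a convex combination of functions with ranges deeply inside $I$ again has range deeply inside $I$; and for $\K$-simple $\chi_1,\chi_2$ on a common domain $J$, the inequality $\M^0(\alpha\chi_1+(1-\alpha)\chi_2)\le\alpha\M^0(\chi_1)+(1-\alpha)\M^0(\chi_2)$ is just the convexity of $\M$ in the entries (with a common weight vector obtained by a common refinement of the two partitions). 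For general $f,g\in\calL_C(I)$ one approximates in $\tau_\K$ by $\K$-simple functions (Lemma~\ref{lem:INT1} guarantees these exist arbitrarily close) and passes to the limit using continuity of $\M^0$ in $\tau_\K$; one must check the approximating sequences can be chosen to keep ranges inside a fixed $S\Subset I$, which is routine since the construction in Lemma~\ref{lem:INT1} only shrinks the range.

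The main obstacle I anticipate is the reduction to monotone functions via rearrangement, specifically making sure that the auxiliary sequences $f_n\nearrow f^*$ and $h_n\searrow f^*$ can be chosen with the \emph{strict} gap condition $\inf_x|f_n-f^*|>0$ while staying inside $\calL(J,I)$ — this is where the hypothesis $I_0\Subset I$ (equivalently, restricting to $\calL_C(I)$ rather than $\calL(I)$) is essential, and it is the crux that forces the ``$\calL_C$'' rather than ``$\calL$'' in the statement. Everything else is bookkeeping: checking $X$ is $\tau_\K$-open, checking $\M^0$ is integral-type, and the limiting argument for convexity.
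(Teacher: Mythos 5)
There is a genuine gap at the central step, namely the proof that $\M^L(f)=\M^U(f)$ for a nonincreasing $f\in\calL_0(J,I_0)$. You propose to take $f_n\nearrow f$ and $h_n\searrow f$ with strict gaps and claim that the ``moreover'' part of Lemma~\ref{lem:38} applied to each sequence ``gives $\M^L(f)=\M^U(f)$ directly''. It does not. The increasing sequence yields $\lim_n\M^L(f_n)=\lim_n\M^U(f_n)=\M^L(f)$, while the decreasing one yields $\lim_n\M^L(h_n)=\lim_n\M^U(h_n)=\M^U(f)$; these are statements about two \emph{different} limits, and the only available bridge between them is the monotonicity coming from $f_n\le f\le h_m$, which reproduces nothing beyond the trivial inequality $\M^L(f)\le\M^U(f)$. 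The nontrivial inequality $\M^U(f)\le\M^L(f)$ is exactly where the convexity (concavity) hypothesis has to enter, and your argument never uses that hypothesis for well-definedness --- a red flag, since your reasoning, if valid, would establish the conclusion for every symmetric monotone $\R$-weighted mean.

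The paper closes this gap with an averaging trick. It picks $\R$-simple functions $\chi_n$ with $f-2^{-n}\le\chi_n\le f$ and $\eta_n$ with $f+2^{-n}\le\eta_n\le f+2^{-n+1}$, so that Lemma~\ref{lem:38} gives $\M\chi_n(x)\,dx\to\M^L(f)$ and $\M\eta_n(x)\,dx\to\M^U(f)$. The midpoints $\tfrac12(\eta_n+\chi_n)$ squeeze down to $f$ from above, so their means tend to $\M^U(f)$; convexity of $\M$ then yields $\M^U(f)\le\lim_n\tfrac12\big(\M\eta_n(x)\,dx+\M\chi_n(x)\,dx\big)=\tfrac12\big(\M^U(f)+\M^L(f)\big)$, i.e.\ $\M^U(f)\le\M^L(f)$ (and the reversed inequalities in the concave case). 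This is the idea your proposal is missing; the remainder of your outline --- the reduction to nonincreasing functions via Lemma~\ref{lem:SymIN}, the role of $I_0\Subset I$, and the approximation-and-limit argument for the convexity of $\M^0$ --- is in line with the paper.
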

\begin{proof}
We show that for all $f \in \calL_C(I)$ the equality $\M^L(f)=\M^U(f)$ holds. We obviously have $\M^L(f)\le\M^U(f)$. 
For the converse inequality by Lemma~\ref{lem:SymIN}, we can restrict our consideration to nonincreasing functions only.
Furthermore by the definition of $\M^L$ an $\M^U$ we can take a function in the set $\Pi(I)$ defined in \eq{Projset}.
Thus let $f \in \Pi(I)$ be an arbitrary nondecreasing function such that $\RG (f) \Subset I$. 

Take a family of $\R$-simple functions $\chi_n \colon [0,1) \to I$ such that 
$f-\frac{1}{2^n} \le \chi_n \le f$.
 Furthermore, there exists a family $\eta_n\colon [0,1) \to I$ of $\R$-simple functions such that $f+\frac{1}{2^n}\le \eta_n \le f+\frac{1}{2^{n-1}}$. Then by Lemma~\ref{lem:38}
 \Eq{*}{
 \lim_{n \to \infty} \M \eta_n(x)\:dx = \M^U(f) \text{ and }  \lim_{n \to \infty} \M \chi_n(x)\:dx = \M^L(f).
 }
Applying this lemma again, as $\M$ is convex, one gets
 \Eq{*}{
\M^U(f)= \lim_{n \to \infty} \Mm \frac{\eta_n(x)+\chi_n(x)}2\:dx \le 
\lim_{n \to \infty} \frac{\M\eta_n(x)dx+\M\chi_n(x)dx}2=\frac{\M^U(f)+\M^L(f)}2.
 }
Thus $\M^L(f)\ge \M^U(f)$. As the converse inequality is trivial we obtain $\M^L(f)= \M^U(f)$.
 
\medskip

To show the convexity of $\M^0$ let $\alpha\in (0,1)$ and $f,g \in \Pi(I)$ be two functions for which $\M^0$ is well-defined. Then for all $\varepsilon>0$ there exist $f_\varepsilon \in B_{\varepsilon}(f)$ and $g_\varepsilon \in B_\varepsilon(g)$ such that 
$\Mm f_{\varepsilon}(x)\:dx<\M^0(f)+\varepsilon$ and $\Mm g_{\varepsilon}(x)\:dx<\M^0(g)+\varepsilon$. Thus, as $\M$ is convex and both $f_\varepsilon$, $g_\varepsilon$ are $\R$-simple functions we have, for all $\varepsilon>0$,
\Eq{*}{
\Mm \alpha f_\varepsilon(x)+(1-\alpha)g_\varepsilon(x)\:dx 
&\le \alpha\Mm f_{\varepsilon}(x)\:dx+(1-\alpha)\Mm g_{\varepsilon}(x)\:dx\\
&<\alpha\M^0(f)+(1-\alpha)\M^0(g)+\varepsilon.
}
But $\alpha f_\varepsilon+(1-\alpha)g_\varepsilon \tauto \alpha f+(1-\alpha)g$ so if we take the limit as $\varepsilon \to 0$ side-by-side we obtain
$\M^0(\alpha f+(1-\alpha )g) \le \alpha\M^0(f)+(1-\alpha)\M^0(g)$. Thus $\M^0$ is convex. 

The case when $\M$ is concave is completely analogous. 
\end{proof}

This theorem has an important corollary
\begin{cor}
Let $\M$ be a symmetric, monotone $\R$-weighted mean on $I$ which is either convex or concave. Then $\M$ is continuous on each $I^n \times (0,+\infty)^n$, where $n\in\N$.
\end{cor}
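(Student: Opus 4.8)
The plan is to deduce the continuity of $\M$ on $I^n\times(0,+\infty)^n$ from the just-proved theorem by encoding a weighted mean as the integral mean of a step function. First I would fix $n\in\N$ and consider the map that sends $(x,\lambda)\in I^n\times(0,+\infty)^n$ to the $\R$-simple function $f_{x,\lambda}\colon[0,1)\to I$ which equals $x_i$ on the $i$-th subinterval of $[0,1)$ whose length is $\lambda_i/(\lambda_1+\dots+\lambda_n)$. By the nullhomogeneity of $\M$ in the weights and formula \eq{E:int_notion} we have $\M(x,\lambda)=\Mm f_{x,\lambda}(t)\,dt=\M^0(f_{x,\lambda})$, the last equality because $f_{x,\lambda}$ is $\R$-simple (so $\M^0$ is well-defined on it and agrees with $\Mm$, as recorded right after the definition of $\M^0$). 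Hence it suffices to prove that $(x,\lambda)\mapsto f_{x,\lambda}$ is continuous from $I^n\times(0,+\infty)^n$ into $(\calL(I),\tau_\R)$ at every point, and then invoke continuity of $\M^0$ in $\tau=\tau_\R$ on its domain.

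Next I would verify that $\tau$-continuity. Fix $(x,\lambda)$ and a sequence $(x^{(k)},\lambda^{(k)})\to(x,\lambda)$ with all weights positive; I need to check the four conditions in the definition of $g_k\tauto f$ for $g_k:=f_{x^{(k)},\lambda^{(k)}}$ and $f:=f_{x,\lambda}$. Condition~1 (common domain) and condition~2 ($\R$-simple) are automatic since every $f_{x,\lambda}$ is an $\R$-simple function on $[0,1)$. For condition~3, $\RG(g_k)=[\min_i x_i^{(k)},\max_i x_i^{(k)}]\to\RG(f)$, so $|\RG(g_k)\setminus\RG(f)|\to0$. For condition~4, the breakpoints $t^{(k)}_j:=(\lambda^{(k)}_1+\dots+\lambda^{(k)}_j)/(\sum_i\lambda^{(k)}_i)$ converge to the breakpoints $t_j$ of $f$ (because the weights are positive, the denominators stay bounded away from $0$), and the values $x^{(k)}_i\to x_i$; a direct estimate then gives $\norm{g_k-f}_1\le \sum_j |x^{(k)}_j-x_j| + 2(\max_i|x_i|+o(1))\sum_j |t^{(k)}_j-t_j|\to0$. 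So $g_k\tauto f$. Since the Theorem guarantees $\M^0$ is well-defined (and hence, as noted in the text, continuous in $\tau$) on the set $\calL_C(I)\supseteq\{f_{x,\lambda}\}$ — indeed each $f_{x,\lambda}$ takes finitely many values in $I$, so $\RG(f_{x,\lambda})$ is a compact subinterval deeply contained in some $S\Subset I$ — we conclude $\M(x^{(k)},\lambda^{(k)})=\M^0(g_k)\to\M^0(f)=\M(x,\lambda)$.

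One small point to address cleanly is that $\M^0$'s domain $\calL_C(I)$ is built from intervals $S\Subset I$, whereas a constant function, or a step function whose values include an endpoint of $I$, need not have range deeply contained in $I$; but for the purpose of this corollary we only need the restriction of $\M$ to $I^n\times(0,+\infty)^n$, and there is no harm: given a convergent sequence in this set, all the functions $f_{x^{(k)},\lambda^{(k)}}$ and the limit $f_{x,\lambda}$ together take values in a fixed compact subinterval $[\,a,b\,]\subset I$ (using convergence of the $x^{(k)}$), and if $[a,b]$ is not already $\Subset I$ one simply uses the continuity statement of the Theorem applied on a slightly smaller interval or notes that the Theorem's conclusion on $\calL_C(I)$ plus the semicontinuity Lemma~\ref{lem:38} localizes the argument; in fact the cleanest route is to apply Lemma~\ref{lem:38}'s symmetry-and-monotonicity machinery directly, but since $\M^0$ is already established to be $\tau$-continuous on $\calL_C(I)$, it is enough to observe $f_{x,\lambda}\in\calL_C(I)$ whenever $\RG(f_{x,\lambda})\Subset I$, and to reduce the general case to that one by replacing $I$ with an open subinterval containing $[a,b]$ (the mean $\M$ restricts to such subintervals).

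The main obstacle is purely the bookkeeping in the last paragraph: making sure the $\tau_\R$-convergence argument is not vacuous or circular when range-deep-containment in $I$ fails, and confirming that restricting $\M$ to a subinterval $I'\Subset I$ with $[a,b]\subset I'$ is legitimate (it is, since the axioms defining an $R$-weighted mean on $I$ restrict to $I'$, and the hypotheses symmetric/monotone/convex-or-concave are inherited). Everything else — the explicit $L^1$ estimate on step functions and the identification $\M(x,\lambda)=\M^0(f_{x,\lambda})$ — is routine.
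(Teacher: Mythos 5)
Your overall strategy --- encode $(x,\lambda)$ as the $\R$-simple step function $f_{x,\lambda}$ on $[0,1)$, identify $\M(x,\lambda)=\Mm f_{x,\lambda}(t)\,dt$ via \eq{E:int_notion} and nullhomogeneity in the weights, check that $(x,\lambda)\mapsto f_{x,\lambda}$ is $\tau_\R$-continuous on $I^n\times(0,+\infty)^n$, and then invoke the $\tau$-continuity of $\M^0$ on its domain --- is the natural deduction of the corollary from the preceding theorem (the paper itself offers no proof), and all of these steps check out: conditions 1--4 of $\tau_\R$-convergence hold exactly as you compute, and the squeeze $\inf\{\Mm g(t)dt: g\in B_\varepsilon(f)\}\le \Mm g_k(t)dt\le\sup\{\Mm g(t)dt: g\in B_\varepsilon(f)\}$ gives the conclusion at every point where $\M^0$ is well-defined at $f_{x,\lambda}$. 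This completely settles the case $[\min_i x_i,\max_i x_i]\Subset I$, and in particular the whole corollary whenever $I$ is open.

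The remaining case is a genuine gap, not bookkeeping, and your proposed patch does not close it. If some $x_i$ coincides with an endpoint of $I$ that belongs to $I$, then $\RG(f_{x,\lambda})$ contains a point outside $\interior I$, so $f_{x,\lambda}\notin\calL_C(I)$ \emph{and} there is no subinterval $I'\subseteq I$ with $\RG(f_{x,\lambda})\Subset I'$ (deep containment forces an open neighbourhood of that endpoint inside $I'$, hence inside $I$, which is impossible). So restricting $\M$ to a smaller interval cannot make the Theorem applicable; nor can Lemma~\ref{lem:38}, whose hypotheses likewise require $I_0\Subset I$. This restriction in the Theorem is not cosmetic: its proof sandwiches $f$ between simple functions $\chi_n\le f\le\eta_n-2^{-n}$ with values still in $I$, which needs room above $\sup f$ inside $I$. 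Consequently, for a convex mean the well-definedness and lower semicontinuity of $\M^0$ at step functions attaining $\sup I$ (and, dually for concave means, $\inf I$) is exactly what is not established --- and note that convexity plus monotonicity of a one-variable section only forces continuity at the \emph{left} endpoint of its interval of definition, not the right, so no soft argument rescues this case. To complete the proof you need a separate finite-dimensional argument (e.g.\ a direct squeeze using monotonicity, convexity along segments joining $x$ to interior points, and the mean-value property) for tuples with entries on $\partial I\cap I$, or you should state explicitly that your argument proves the corollary for $x\in(\interior I)^n$.
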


\section{Applications}
\subsection{Deviation means}
At the moment we apply our results to the family of deviation (Dar\'oczy) means \cite{Dar71b,Dar72b}. They are parameterized by a bivariate function. More precisely a function $E\colon I\times I\to\R$ is said to be a \emph{deviation} if
\begin{enumerate}[(a)]
\item  $E(x,x)=0$ for all $x\in I$,
\item for all $x\in I$, the map $I \ni y\mapsto E(x,y)$ is continuous and strictly decreasing.
\end{enumerate}

Then, for a deviation $E$, a number $n\in\N$, and a pair $(x,\lambda)\in I^n\times W_n(\R)$, we define a (weighted) \emph{deviation mean} $\D_E(x,\lambda)$ as the unique solution $y$ of the equation 
\Eq{*}{
\lambda_1 E(x_1,y)+\cdots+\lambda_n E(x_n,y)=0.
}

We restrict our consideration to continuous deviations only (i.e. $E$ is continuous as a bivariate function). Then $\D_E$ is the $\R$-weighted mean on $I$ which is continuous in entries and weights. 
Furthermore it is the well-known folk result that $\D_E$ is monotone if and only if $E$ is strictly increasing in its first variable. Using these facts one can formulate certain sufficient condition to guarantee that $\D_E^0$ is well-defined.

\begin{thm}\label{thm:DevExt}
Let $E \colon I \times I \to \R$ be a continuous deviation. Then $\D_E^0 \colon \calL_C(I) \to I$ is well defined and $\D_E^0(f)$ is the unique solution $y$ of the equation 
\Eq{*}{
\int E(f(t),y)\:dt=0 \qquad \text{ for all }f \in \calL_C(I).
}
Moreover $\D_E^0$ can be extended to $\calL(I)$ provided such a continuous extension exists.
\end{thm}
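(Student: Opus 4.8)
The plan is to reduce everything to the three preceding structural results: the convexity/concavity extension theorem for symmetric, monotone, convex (concave) $\R$-weighted means, Proposition~\ref{prop:suff}, and the semicontinuity Lemma~\ref{lem:38}. The key observation is that although a generic continuous deviation $E$ need not be convex or concave in its first variable, the \emph{value} $\D_E^0(f)$ is characterized implicitly, and the implicit equation $\int E(f(t),y)\,dt = 0$ behaves well under $\tau_\K$-convergence. So first I would verify that the candidate functional $\Nm(f):=\{\text{unique }y\text{ with }\int_{\Dom f} E(f(t),y)\,dt=0\}$ is well-defined on $\calL_C(I)$: for $f\in\calL_0(S)$ with $S\Subset I$, the function $y\mapsto \int E(f(t),y)\,dt$ is continuous (dominated convergence, using that $E$ is continuous and $f$ has relatively compact range so $E(f(t),\cdot)$ is uniformly bounded on compact $y$-ranges) and strictly decreasing (since each $E(x,\cdot)$ is strictly decreasing), and it changes sign across $\RG(f)$ because $E(x,x)=0$ forces $\int E(f(t),\sup f)\,dt\le 0\le \int E(f(t),\inf f)\,dt$; hence a unique root $y\in\RG(f)$ exists. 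Thus $\Nm$ is a functional mean on $\calL_C(I)$, and it is clearly an integral-type mean because $\int E(f(t),y)\,dt$ only depends on $f$ up to a.e.\ equality.

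Next I would check that $\Nm$ is continuous in $\tau_\K$ on (a suitable $\tau_\K$-open subset of) $\calL_C(I)$. Fix $f\in\calL_0(S)$, $S\Subset I$, and suppose $g_n\tauto f$; by the definition of $\tau_\K$ we may assume all $g_n$ are $\K$-simple on $\Dom f$, $\|g_n-f\|_1\to 0$, and $|\RG(g_n)\setminus\RG(f)|\to 0$. The last condition together with $\RG(f)\Subset I$ lets me confine the ranges of $g_n$ (for large $n$) to a fixed compact $S'\Subset I$, on which $E$ is uniformly continuous and bounded, with the partial derivative-free strict-monotonicity giving a uniform lower bound on the ``slope'' $\frac{1}{|U|}\big(\int E(g_n(t),y_1)-E(g_n(t),y_2)\,dt\big)$ for $y_1<y_2$ in any compact subinterval of the interior of $S'$. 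Then $\|g_n-f\|_1\to 0$ forces $\int E(g_n(t),y)\,dt\to \int E(f(t),y)\,dt$ uniformly in $y$ over that subinterval, and a standard bisection/implicit-function argument shows the roots converge, i.e.\ $\Nm(g_n)\to\Nm(f)$. Finally, for a $\K$-simple $\chi$ the equation $\int E(\chi(t),y)\,dt=0$ is exactly the defining equation of the discrete weighted deviation mean $\D_E(\chi|_{D_1},\dots;|D_1|,\dots)=\D_E\chi(x)\,dx$, so $\Nm(\chi)=\Mm\chi(x)\,dx$ for $\K$-simple $\chi$. Applying Proposition~\ref{prop:suff} (with $\M=\D_E$, $X$ the $\tau_\K$-open set $\calL_C(I)$, and $\Nm$ as above) yields that $\D_E^0$ is well-defined on $\calL_C(I)$ and equals $\Nm$, which is the displayed characterization.

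For the ``moreover'' part I would argue as follows. Suppose $\D_E^0$ admits a continuous (in $\tau$) extension, say $\widetilde\D$, to all of $\calL(I)$ (or at least to a $\tau$-open set containing $\calL(I)$); then by density of $\K$-simple functions in each $\calL(J,I)$ in the $\tau$-topology together with the fact that $\widetilde\D$ agrees with $\D_E\,\cdot\,(x)\,dx$ on $\K$-simple functions, $\widetilde\D$ is uniquely determined, and it is necessarily given by the limit of $\M g(x)\,dx$ along any $g\tauto f$; by the squeeze $\M^L\le\M^U$ this forces $\M^L=\M^U$ on the whole extended domain, so $\D_E^0$ itself is that extension. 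In fact Lemma~\ref{lem:38} gives the concrete tool: for arbitrary $f\in\calL(I)$ one sandwiches $f$ between monotone sequences $f_n\nearrow f$, $h_n\searrow f$ of functions with $\inf|f_n-f|>0$, $\inf|h_n-f|>0$ (shifting by $\pm 2^{-n}$ inside a slightly larger interval as in the proof of that lemma), obtains $\D_E^0(f_n),\D_E^0(h_n)$ from the first part, and identifies the common limit as the unique root of $\int E(f(t),y)\,dt=0$ using exactly the continuity estimate above applied to $\|f_n-f\|_1\to 0$. I expect the main obstacle to be the uniform-in-$y$ control needed for the implicit-root continuity: one must leverage $\RG(f)\Subset I$ to get equicontinuity and a uniform strict-monotonicity (anti-Lipschitz) bound for $y\mapsto\int E(g_n(t),y)\,dt$, since without range-compactness $E$ could degenerate near the endpoints of $I$; this is precisely why the domain $\calL_C(I)$, rather than all of $\calL(I)$, is the natural one in the unconditional statement, and why the full $\calL(I)$ case is only asserted conditionally.
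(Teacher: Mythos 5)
Your proposal is correct and follows essentially the same route as the paper: define $\Nm(f)$ implicitly as the unique zero of $y\mapsto\int E(f(t),y)\,dt$ (continuity, strict decrease, and sign change across $\RG(f)$ giving existence and uniqueness), prove $\tau_\K$-continuity of $\Nm$ by confining ranges to a compact subinterval where $E$ is uniformly continuous and bounded and passing $\|g_n-f\|_1\to0$ through the integral, identify $\Nm$ with $\D_E\,\cdot\,(x)\,dx$ on $\K$-simple functions, and invoke Proposition~\ref{prop:suff}. The only cosmetic difference is that the paper deduces root convergence directly from pointwise convergence $e_{\chi_n}(y)\to e_f(y)$ together with strict monotonicity of each $e_{\chi_n}$, so the uniform anti-Lipschitz bound you anticipate needing is not actually required.
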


\begin{proof} We aim to use Proposition~\ref{prop:suff}.
For $f \in \Pi(I)$ define $e_f \colon I \to \R$ by
\Eq{*}{
e_f(y):=\int_0^1 E(f(t),y)\:dt.
}
First we prove that $e_f$ is continuous for all $f \in \Pi(I)\cap \calL_C(I)$. Indeed, as $E$ is continuous on the compact set $\RG(f)\times \RG(f)$, it is also uniformly continuous. Thus for all $\varepsilon>0$ there exists $\delta>0$ such that 
\Eq{*}{
\abs{E(x,y)-E(x,y')}<\varepsilon \text{ for all }x,y,y' \in \RG(f) \text{ with }\abs{y-y'}<\delta.
}
Then we simply have $\abs{e_f(y)-e_f(y')}\le\varepsilon$ for all $y,\ y'\in \RG(f)$ as above.

Next we show that $e_f$ is strictly decreasing for all $f$ as above. To this end take $y,y' \in \RG(f)$ with $y'>y$. As $E$ is continuous and strictly decreasing in the second variable and $\RG(f)$ is compact, there exists $\alpha>0$ such that 
\Eq{*}{E(x,y')\le E(x,y)-\frac{\alpha}{\abs{\Dom f}}\qquad\text{ for all } x \in \RG(f).}
Then we simply obtain $e_f(y')\le e_f(y)-\alpha<e_f(y)$.
Furthermore whenever $f \in \calL_C(I)$ we can easily verify that 
$e_f(\inf f) \ge 0$ and $e_f(\sup f) \le 0$.

Thus there exists a unique number $\Nm(f) \in \RG(f)$ such that $e_f(\Nm(f))=0$. 

\smallskip
In the next step we show that $\D_E^0(f)=\Nm(f)$ for all $f \in \calL_C([0,1),I)$. Equivalently, by the definition we need to show that $\Nm$ is continuous in $\tau=\tau_\R$.

To this end, take a sequence of $(\chi_n)_{n=1}^\infty$ with $\chi_n\tauto f$.

Then, since $f\in \calL_C([0,1),I)$, there exists a subinterval $J \Subset I$ such that almost all $\chi_n$ have values in $J$. We can omit the beginning of the sequence $\chi_n$ and assume all $\chi_n$-s are $\R$-simple functions in $\calL([0,1),J)$.

Fix $\varepsilon>0$. Since $E$ is uniformly continuous on $J\times J$, there exists $\delta:=\delta(\varepsilon)>0$ such that
\Eq{*}{
\abs{E(x,y)-E(x_0,y_0)}<\varepsilon \qquad \text{ for all }x,y,x_0,y_0\in J \text{ with }\abs{x-x_0}+\abs{y-y_0}<2\delta.
}
Set $D_n:=\{x \in [0,1)\colon |\chi_n(x)-f(x)|>\delta\}$. Then, there exists a number $n_\varepsilon \in \N$ such that 
$|D_n|<\varepsilon$ for all $n>n_\varepsilon$. Consequently for all $n>n_\varepsilon$ and $y \in J$ we have
\Eq{*}{
\abs{e_{\chi_n}(y)-e_f(y)}&=\abs{\int_0^1 E(\chi_n(t),y)-E(f(t),y)\:dt}\\
&\le\abs{\int_{D_n} E(\chi_n(t),y)-E(f(t),y)\:dt}+\abs{\int_{[0,1)\setminus D_n} E(\chi_n(t),y)-E(f(t),y)\:dt}\\
&\le|D_n| \sup_{x,y\in J} 2|E(x,y)| +\varepsilon \le \big( 1+ 2 \norm{E}_{L^\infty(J\times J)}\big) \,\varepsilon.
}
Thus the sequence $(e_{\chi_n})_{n=1}^\infty$ converges to $e_f$ uniformly on $J$.
Consequently as each $e_f$ is strictly decreasing we obtain
\Eq{*}{
\lim_{n \to \infty } e_{\chi_n}(y)=e_f(y) >0 \text{ for all }y < \Nm(f).
}
Now, as each $e_{\chi_n}$ is strictly decreasing and $\D_E\chi_n(x)\:dx$ is its unique zero, it yields
\Eq{*}{
\liminf_{n \to \infty} \D_E\chi_n(x)\:dx \ge \Nm(f).
}
Similarly we obtain the converse inequality. Thus the sequence $(\D_E\chi_n(x)\:dx)_{n=1}^\infty$ is convergent and 
\Eq{*}{
\liminf_{n \to \infty} \D_E\chi_n(x)\:dx = \Nm(f).
}
Finally, as $\chi_n$ was an arbitrary sequence which is convergent to $f$ in $\tau$ we get $\D_E^0(f) = \Nm(f)$.

The moreover part is a straightforward implication of the definition of $\D_E^0$.
\end{proof}

\subsection{\label{sec:Hardy} Hardy property} In this section we deliver a brief introduction to the Hardy property and its extension to integral means. This introduction is inspired by \cite{PalPas19b}, especially Theorem 2.8 therein. Let $\M$ be a symmetric and monotone $\K$-weighted mean on $I$ with $\inf I=0$. We define a Hardy constant $\Hc(\M)$ as the smallest extended real number such that
\Eq{*}{
\sum_{n=1}^\infty \lambda_n \M \big((x_1,\dots,x_n),(\lambda_1,\dots,\lambda_n)) \le \Hc(\M) \sum_{n=1}^\infty \lambda_nx_n
}
for every all-positive-elements sequences $(x_n)_{n=1}^\infty$  and $(\lambda_n)_{n=1}^\infty$ of elements in $I$ and $\K$, respectively.

Now we can introduce the integral counterpart of this notion. Namely, for a functional mean $\iM$ on $I$ we define a Hardy constant $\iHc(\iM)$ as the smallest extended real number such that
\Eq{*}{
\int_0^\infty \iM \big(f\big|_{[0,t)}\big)\:dt \le \iHc(\iM) \int_0^\infty f(t)\:dt
}
for every integrable function $f \colon [0,+\infty) \to I$. 

Let us first show an important lemma which shows that this value is suitable for finite integrals too.

\begin{lem}\label{lem:Hfininf}
 Let $\iM$ be a nullhomogeneous functional mean on $I$, i.e. $\iM(f(\alpha \,\cdot\,))=\iM(f)$ for all admissible $f$ and $\alpha>0$. Furthermore fix $s\in(0,+\infty)$. Then $\iHc(\iM)$ is the smallest extended real number $H$ such that
 \Eq{020121a}{
 \int_0^s \iM \big(f\big|_{[0,t)}\big)\:dt \le H \int_0^s f(t)\:dt\text{ for all }f\in \calL\big([0,s),I\big).
 }
\end{lem}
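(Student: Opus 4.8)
The plan is to show the two inequalities are equivalent under the nullhomogeneity hypothesis by a scaling argument that converts any integral over $[0,s)$ to one over $[0,\infty)$ and vice versa. Write $H_s$ for the smallest constant $H$ satisfying \eq{020121a}, and let $H_\infty := \iHc(\iM)$ be the constant in the definition applied to the full half-line. The goal is $H_s = H_\infty$.

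First I would prove $H_s \le H_\infty$. Given $f \in \calL\big([0,s),I\big)$, extend it by picking any $c \in I$ and setting $\tilde f := f$ on $[0,s)$ and $\tilde f \equiv c$ on $[s,+\infty)$. Since $f$ is integrable on the bounded interval $[0,s)$ but $\tilde f$ need not be integrable on $[0,+\infty)$, this naive extension does not immediately work; instead I would truncate more carefully, or better, avoid integrability issues by first establishing the reverse inequality and a monotonicity-in-$s$ observation. So let me restructure: the cleaner route is to first show that the function $s \mapsto H_s$ is constant. Fix $s, s' \in (0,+\infty)$ with, say, $s' = \alpha s$ for $\alpha > 0$. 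For $f \in \calL\big([0,s),I\big)$ define $g(t) := f(t/\alpha)$ for $t \in [0,s')$; then $g \in \calL\big([0,s'),I\big)$. By the substitution $t = \alpha u$ we get $\int_0^{s'} g(t)\,dt = \alpha \int_0^s f(u)\,du$, and $g|_{[0,t)}$ is, after the same substitution $t = \alpha u$, the function $u \mapsto f(\alpha u / \alpha)$ on $[0,u)$ rescaled — precisely, $g|_{[0,\alpha u)} = (f|_{[0,u)})(\,\cdot\,/\alpha)$, so by nullhomogeneity $\iM\big(g|_{[0,\alpha u)}\big) = \iM\big(f|_{[0,u)}\big)$. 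Hence $\int_0^{s'} \iM\big(g|_{[0,t)}\big)\,dt = \alpha \int_0^s \iM\big(f|_{[0,u)}\big)\,du$. The factor $\alpha$ cancels on both sides of the target inequality, so \eq{020121a} holds for $s$ with constant $H$ if and only if it holds for $s'$ with the same constant $H$; therefore $H_s = H_{s'}$ for all $s, s' > 0$, and the common value may be denoted $H_*$.

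Next I would identify $H_*$ with $H_\infty$. For $H_* \le H_\infty$: given $s$ and $f \in \calL\big([0,s),I\big)$, the restriction inequality follows by applying the half-line inequality to a suitable extension only after knowing integrability on $[0,\infty)$ is not needed — but in fact the half-line definition quantifies over \emph{integrable} $f\colon [0,+\infty)\to I$, so I would instead argue: since $H_s = H_{s''}$ for every $s''$, it suffices to take $s''$ large and note that an integrable $f$ on $[0,+\infty)$ restricted to $[0,s'')$ satisfies $\int_0^{s''} \iM(f|_{[0,t)})\,dt \le H_{s''}\int_0^{s''} f(t)\,dt \le H_* \int_0^\infty f(t)\,dt$ (using $f \ge 0$, as $I$ has $\inf I = 0$ so $f \ge 0$); letting $s'' \to \infty$ and invoking monotone convergence on the left gives $\int_0^\infty \iM(f|_{[0,t)})\,dt \le H_* \int_0^\infty f(t)\,dt$, hence $H_\infty \le H_*$. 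Conversely, for $H_* \le H_\infty$, given $f \in \calL\big([0,s),I\big)$ extend by zero: $\tilde f := f$ on $[0,s)$, $\tilde f \equiv 0$ on $[s,+\infty)$ (legitimate since $0 = \inf I \in \bar I$; if $0 \notin I$ one approximates). Then $\tilde f$ is integrable on $[0,+\infty)$, $\int_0^\infty \tilde f = \int_0^s f$, and for $t > s$ the value $\iM(\tilde f|_{[0,t)})$ lies in $\RG(\tilde f|_{[0,t)}) \subset [0,\sup f]$, so the tail $\int_s^\infty \iM(\tilde f|_{[0,t)})\,dt$ could diverge — which would force the left side infinite. To handle this I would instead use the constancy of $H_s$ again together with monotone approximation: replace $\tilde f$ by functions vanishing outside $[0,s)$ only in the limit. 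The honest fix is that one does not extend at all; rather, the inequality \eq{020121a} for the chosen $s$ is implied by the half-line version applied to the zero-extension when $\iM$ of the zero function (on any interval containing a null tail) contributes a controllable amount — here I would use that $\iM(0) = 0$ since $0 \in \RG(\mathbf{0})$... but $\RG$ of the zero function is $\{0\}$, so indeed $\iM(\tilde f|_{[0,t)}) \to 0$ is false in general.

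The main obstacle, then, is exactly this tail-control issue in the direction $H_\infty \le H_*$ (equivalently showing the half-line constant is not larger than the finite-interval one): one must show that the contribution of the "infinite tail" where $f$ has been zero-extended does not blow up the left-hand integral. I expect the paper resolves this by the scaling trick combined with a limiting argument: take $f$ on $[0,s)$, for each $\alpha \in (0,1)$ compress it into $[0,\alpha s)$ via $f_\alpha(t) := f(t/\alpha)$, zero-extend to $[0,s)$, and observe that as $\alpha \to 1$ the compressed functions exhaust $[0,s)$ while $\int_0^s \iM\big((f_\alpha)|_{[0,t)}\big)\,dt$ on the tail $[\alpha s, s)$ is bounded by $(1-\alpha)s \cdot \sup f \to 0$; passing to the limit recovers \eq{020121a} from the half-line inequality. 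So the proof structure I would write is: (i) the scaling identity via nullhomogeneity showing $H_s$ is independent of $s$; (ii) $H_\infty \le H_s$ by restriction plus monotone convergence; (iii) $H_s \le H_\infty$ by zero-extension-after-compression plus the tail estimate and $\alpha\to 1$. Step (iii) is where care is genuinely needed; steps (i) and (ii) are routine changes of variable and monotone convergence.
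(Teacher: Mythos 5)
Your steps (i) and (ii) are sound: the scaling identity coming from nullhomogeneity shows that $H_s$ does not depend on $s$, and restriction plus monotone convergence then yields $\iHc(\iM)\le H_s$. This is a direct version of the paper's argument for that direction (the paper instead argues by contradiction, localizing a witness function to a finite interval $[0,r)$ and rescaling it onto $[0,s)$, but the mechanism — nullhomogeneity as a change of variables — is the same).

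The genuine gap is in step (iii), i.e.\ $H_s\le\iHc(\iM)$. The obstacle you spend most of your effort on is not an obstacle: after extending $f$ to $\tilde f$ on $[0,+\infty)$ one only needs the chain
$\int_0^s \iM\big(f|_{[0,t)}\big)\,dt\le\int_0^\infty \iM\big(\tilde f|_{[0,t)}\big)\,dt\le\iHc(\iM)\int_0^\infty \tilde f(t)\,dt$,
where the first inequality holds because the integrand is nonnegative ($\inf I=0$), so the tail of the left-hand integral is simply discarded; a divergent tail would only force $\iHc(\iM)=+\infty$, making this direction trivial. The real obstacle, which you mention only in passing (``if $0\notin I$ one approximates'') and never resolve, is that $0$ need not belong to $I$ — indeed $I=\R_+$ is exactly the case of interest for the deviation and Gini means later in the paper — so the zero-extension is not an element of $\calL(I)$ and $\iM$ is simply undefined on it. Your compression-to-$[0,\alpha s)$ device does not repair this: the compressed function is still zero-extended and still takes the value $0$ on a set of positive measure, and the $\alpha\to1$ limit addresses the harmless tail rather than the admissibility problem. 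The paper's fix is to extend $f$ by $t\mapsto\varepsilon e^{-t}$ on $[s,+\infty)$: this keeps all values inside $I$, keeps the extension integrable, adds only $\varepsilon$ to the right-hand side, and the desired inequality follows upon letting $\varepsilon\to0$. With that substitution your outline closes; as written, step (iii) is incomplete.
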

\begin{proof}
Let $H_0$ be the smallest extended real number such that \eq{020121a} holds with $H=H_0$.
We show both inequalities $H_0 \le \iHc(\iM)$ and $H_0 \ge \iHc(\iM)$. 

First let $s\in(0,+\infty)$, $f \in \calL\big([0,s),I\big)$ and $\varepsilon>0$ such that $\varepsilon e^{-s}\in I$. Define $g \colon [0,+\infty)\to I$ by 
\Eq{*}{
g(t):=\begin{cases}
       f(t) &\text{ for }t<s,\\
       \varepsilon e^{-t} &\text{ for }t\ge s.
      \end{cases}
}
Then we have
\Eq{*}{
\int_0^s \iM \big(f\big|_{[0,t)}\big)\:dt
&=\int_0^s \iM \big(g\big|_{[0,t)}\big)\:dt
\le \int_0^\infty \iM \big(g\big|_{[0,t)}\big)\:dt
\le \iHc(\iM) \int_0^\infty g(t)\:dt\\
&= \iHc(\iM) \big( \int_0^s f(t)\:dt+\varepsilon \int_s^\infty e^{-t}\big)
\le\iHc(\iM) \big( \int_0^s f(t)\:dt+\varepsilon\big).\\
}
Upon passing $\varepsilon\to 0$ we get 
\Eq{*}{
\int_0^s \iM \big(f\big|_{[0,t)}\big)\:dt \le\iHc(\iM) \int_0^s f(t)\:dt.
}
As $f$ was taken arbitrarily, it implies $H_0\le \iHc(\iM)$.

To show the converse inequality assume to the contrary that $H_0<\iHc(\iM)$, i.e. there exists an integrable function $f\colon [0,+\infty) \to I$ such that 
 \Eq{*}{
 \int_0^\infty \iM \big(f\big|_{[0,t)}\big)\:dt > H_0 \int_0^\infty f(t)\:dt.
 }
Then there exists $r \in (0,+\infty)$ which validates the inequality
 \Eq{*}{
 \int_0^r \iM \big(f\big|_{[0,t)}\big)\:dt > H_0 \int_0^r f(t)\:dt.
 }
Then we have
\Eq{*}{
H_0 \int_0^s f(\tfrac rs t)\:dt
&=H_0\tfrac{s}r \int_0^r f(t)\:dt
< \tfrac sr \int_0^r \iM \big(f\big|_{[0,t)}\big)\:dt\\
&=\tfrac sr \int_0^r \iM \big(f\big(\tfrac rs x \big)\big|_{x\in\big[0,\tfrac{st}r\big)}\big)\:dt
=\int_0^s \iM \big(f\big(\tfrac rs x \big)\big|_{x\in[0,t)}\big)\:dt
\le H_0 \int_0^s f(\tfrac rs t)\:dt,
}
which leads to a contradiction and completes the proof.
\end{proof}

Next, let us recall one lemma
\begin{lem}[\cite{PalPas19a}, Lemma~5.1]
 \label{lem:*}
 Let $\M$ be a $\K$-weighted, monotone mean on $I$ and $a \in \K \cap (0,\infty)$. Then, for any nonincreasing $\K$-simple function $f\colon [0,a) \to I$, the mapping $F\colon \K\cap(0,a] \to I$ given by $F(u):=\Mm_0^uf(t)\:dt$, is nonincreasing.
\end{lem}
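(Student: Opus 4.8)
The plan is to prove that $F(u)\ge F(v)$ whenever $u,v\in\K\cap(0,a]$ with $u<v$ (the case $u=v$ being trivial), by exhibiting a $\K$-simple function on $[0,v)$ that dominates $f|_{[0,v)}$ pointwise and whose $\M$-integral is exactly $F(u)$; monotonicity then does the rest.

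First I would normalise the partition. Using the reduction principle, refine the $\K$-simple partition of $[0,v)$ underlying $f$ so that $u$ is one of its breakpoints; this does not change any $\M$-integral. Then $f|_{[0,u)}$ is described by data $\big((c_1,\dots,c_j),(\ell_1,\dots,\ell_j)\big)\in I^j\times W_j(\K)$ with $c_1\ge\cdots\ge c_j$ (as $f$ is nonincreasing), so that $F(u)=\M\big((c_1,\dots,c_j),(\ell_1,\dots,\ell_j)\big)$. By the mean value property $F(u)\ge c_j$, and since $f$ is nonincreasing every value attained by $f$ on $[u,v)$ is $\le c_j\le F(u)$.

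Next, put $y:=F(u)\in I$ and define $g\colon[0,v)\to I$ by $g=f$ on $[0,u)$ and $g\equiv y$ on $[u,v)$. Then $g$ is $\K$-simple and, by the previous paragraph, $g\ge f$ on $[0,v)$; hence by monotonicity in its integral form (the characterisation recalled after \eqref{E:int_notion}) we get $\Mm_0^v g(t)\,dt\ge\Mm_0^v f(t)\,dt=F(v)$. On the other hand, by \eqref{E:int_notion} the function $g$ corresponds to the data $\big((c_1,\dots,c_j,y),(\ell_1,\dots,\ell_j,v-u)\big)$, and since its last entry $y$ equals the $\M$-mean of the remaining entries, the elimination principle gives $\Mm_0^v g(t)\,dt=y=F(u)$. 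Combining the two displays yields $F(u)=\Mm_0^v g(t)\,dt\ge F(v)$, which is the assertion.

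The partition refinement and the pointwise estimate $g\ge f$ are routine; the step I regard as the crux is the identity $\Mm_0^v g(t)\,dt=F(u)$, i.e. recognising that appending to a weighted mean a further entry equal to its current value (with an arbitrary positive weight) does not alter the mean. This is precisely the elimination principle, and it is what makes the device of ``completing $f|_{[0,u)}$ by the constant $F(u)$'' work. If instead one wanted to avoid that axiom, one could argue by approximating the constant tail by copies of the head data via the reduction principle, but invoking the elimination principle directly is cleaner.
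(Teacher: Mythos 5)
Your reduction to $\Mm_0^v g(t)\,dt \ge F(v)$ via monotonicity is fine, but the step you yourself identify as the crux --- the identity $\Mm_0^v g(t)\,dt = F(u)$ --- is not what the elimination principle asserts, and it is false for a general $\K$-weighted mean. The elimination principle of \cite{PalPas18b} only allows one to delete an entry whose weight is \emph{zero}; it says nothing about appending, with a \emph{positive} weight, an entry equal to the current mean value. That stronger ``decomposability'' property is genuinely extra structure: it does hold for deviation means (substitute $z=y$ into $\sum_i\lambda_iE(x_i,z)+\mu E(y,z)=0$), but it already fails for the symmetric, monotone $\R$-weighted mean $\M:=\tfrac12(\A+\P_2)$, which satisfies all four axioms. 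Indeed, for $x=(0,2)$ and $\lambda=(1,1)$ one has $y=\M(x,\lambda)=\tfrac{1+\sqrt2}{2}\approx1.2071$, while $\M\big((0,2,y),(1,1,1)\big)=\tfrac12\big(\tfrac{2+y}{3}+\sqrt{\tfrac{4+y^2}{3}}\,\big)\approx1.2089\ne y$. So for such a mean your $g$ may satisfy $\Mm_0^v g(t)\,dt>F(u)$, and the chain of inequalities no longer yields $F(u)\ge F(v)$.

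The repair is to choose a different dominating function, one whose $\M$-integral is forced to equal $F(u)$ by axioms you actually have. Put $g(t):=f(ut/v)$ for $t\in[0,v)$. Since $f$ is nonincreasing and $ut/v\le t$, we get $g\ge f$ on $[0,v)$, so monotonicity gives $\Mm_0^v g(t)\,dt\ge \Mm_0^v f(t)\,dt=F(v)$. On the other hand, $g$ is the $\K$-simple function with the same values $(c_1,\dots,c_j)$ as $f|_{[0,u)}$ but with weights $\big(\tfrac vu\ell_1,\dots,\tfrac vu\ell_j\big)$, which lie in $\K$ because $\K$ is a field; hence nullhomogeneity in the weights yields $\Mm_0^v g(t)\,dt=\M\big((c_1,\dots,c_j),(\ell_1,\dots,\ell_j)\big)=F(u)$. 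This dilation argument is the one behind \cite[Lemma~5.1]{PalPas19a}; note that it uses only monotonicity and nullhomogeneity, whereas your construction would need an additional decomposability hypothesis on $\M$.
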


Now we are in the position to prove the main result of this section.

\begin{thm}\label{thm:Hardy}
 Let $\M$ be a symmetric and monotone $\K$-weighted mean on $I$ with $\inf I=0$. 
Then $\iHc(\M^L)=\iHc(\M^U)=\Hc(\M)$.
 \end{thm}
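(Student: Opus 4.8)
The strategy is to establish the two inequalities $\iHc(\M^U)\le\Hc(\M)$ and $\Hc(\M)\le\iHc(\M^L)$. Since $\M^L\le\M^U$ pointwise on $\calL(I)$, for every integrable $f\colon[0,+\infty)\to I$ one has $\int_0^\infty\M^L(f|_{[0,t)})\,dt\le\int_0^\infty\M^U(f|_{[0,t)})\,dt$, whence $\iHc(\M^L)\le\iHc(\M^U)$; combined with the two displayed inequalities this yields the chain $\Hc(\M)\le\iHc(\M^L)\le\iHc(\M^U)\le\Hc(\M)$, which is the assertion. Two reductions are used throughout: by Lemma~\ref{lem:EQAff} the functional means $\M^L,\M^U$ are nullhomogeneous, so by Lemma~\ref{lem:Hfininf} each of the integral Hardy inequalities may be tested using integrals over a fixed finite interval $[0,s)$ with $s\in\K$; and by Lemma~\ref{lem:SymIN} a function may be replaced by its decreasing rearrangement without increasing $\M^L$ or decreasing $\M^U$, so it suffices to argue with nonincreasing functions, which in turn can be squeezed between nonincreasing $\K$-simple functions along monotone sequences whose behaviour under $\M^L,\M^U$ is governed by Lemma~\ref{lem:38}.

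The key structural observation is that for a nonincreasing $\K$-simple $g\colon[0,s)\to I$ the maps $t\mapsto\M^L(g|_{[0,t)})$ and $t\mapsto\M^U(g|_{[0,t)})$ are nonincreasing on $(0,s)$: if $0<t<t'<s$, the affine rescaling of $g|_{[0,t)}$ onto $[0,t')$ dominates $g|_{[0,t')}$ pointwise (as $g$ is nonincreasing), so Lemma~\ref{lem:EQAff} together with monotonicity gives $\M^L(g|_{[0,t')})\le\M^L(g|_{[0,t)})$, and likewise for $\M^U$. For $t\in\K$ the restriction $g|_{[0,t)}$ is $\K$-simple, so $\M^L(g|_{[0,t)})\le\Mm_0^t g(x)\,dx\le\M^U(g|_{[0,t)})$, while $t\mapsto\Mm_0^t g(x)\,dx$ is itself nonincreasing on $\K$ by Lemma~\ref{lem:*}; hence $\int_0^s\M^U(g|_{[0,t)})\,dt$ is the integral of a nonincreasing function agreeing with the partial weighted means $\Mm_0^t g(x)\,dx$ on the dense set $\K$, and equals the limit of the Riemann sums $\sum_i(t_i-t_{i-1})\,\Mm_0^{t_{i-1}}g(x)\,dx$ over partitions $0=t_0<\dots<t_m=s$ by points of $\K$ refining the breakpoints of $g$. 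Writing $y_j$ for the value of $g$ on $(t_{j-1},t_j)$ and $\mu_j:=t_j-t_{j-1}\in\K$, each such Riemann sum is, up to boundary and weight-index corrections that vanish as the mesh tends to $0$ (here one uses that the means $\M\big((y_1,\dots,y_j),(\mu_1,\dots,\mu_j)\big)$ are bounded and $\sum_j|\mu_{j+1}-\mu_j|\to0$), at most $\sum_j\mu_j\,\M\big((y_1,\dots,y_j),(\mu_1,\dots,\mu_j)\big)\le\Hc(\M)\sum_j\mu_j y_j=\Hc(\M)\int_0^s g$ by the definition of the discrete Hardy constant applied to the configuration $(y_j),(\mu_j)$. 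Passing to the limit and undoing the reductions gives $\iHc(\M^U)\le\Hc(\M)$.

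For the converse $\Hc(\M)\le\iHc(\M^L)$ (assume the right side finite) the same dictionary is read backwards. Given positive sequences $(x_n)$ in $I$ and $(\lambda_n)$ in $\K$ — which we may assume satisfy $x_1\ge x_2\ge\cdots$, since the discrete Hardy inequality is known to be extremal on nonincreasing configurations — and $N\in\N$, set $\Lambda_n:=\lambda_1+\dots+\lambda_n$ and let $f\colon[0,\Lambda_N)\to I$ be the nonincreasing $\K$-simple function equal to $x_n$ on $[\Lambda_{n-1},\Lambda_n)$. By Lemma~\ref{lem:Hfininf} applied to $\M^L$ we get $\int_0^{\Lambda_N}\M^L(f|_{[0,t)})\,dt\le\iHc(\M^L)\sum_{n=1}^N\lambda_n x_n$, while the monotonicity observation above shows that $t\mapsto\M^L(f|_{[0,t)})$ is nonincreasing with $\M^L(f|_{[0,\Lambda_n)})=\Mm_0^{\Lambda_n}f(x)\,dx=\M\big((x_1,\dots,x_n),(\lambda_1,\dots,\lambda_n)\big)$, whence $\int_0^{\Lambda_N}\M^L(f|_{[0,t)})\,dt\ge\sum_{n=1}^N\lambda_n\M\big((x_1,\dots,x_n),(\lambda_1,\dots,\lambda_n)\big)$. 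Combining the two estimates and letting $N\to\infty$ yields $\sum_{n=1}^\infty\lambda_n\M\big((x_1,\dots,x_n),(\lambda_1,\dots,\lambda_n)\big)\le\iHc(\M^L)\sum_{n=1}^\infty\lambda_n x_n$, i.e. $\Hc(\M)\le\iHc(\M^L)$.

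I expect the genuine obstacle to be the identity $\M^L(h)=\M^U(h)=\Mm h(x)\,dx$ for a nonincreasing $\K$-simple $h$, used repeatedly above (to identify $\M^L(g|_{[0,t)})$ with the partial weighted mean, and in the squeezing of the Riemann sums). One half, $\M^L(h)\le\Mm h(x)\,dx\le\M^U(h)$, is immediate from the definitions; the reverse has to be extracted from the monotonicity supplied by Lemma~\ref{lem:*}, by sandwiching $h$ between $\K$-simple functions obtained by shifting mass toward or away from the origin, precisely because $\M$ is not assumed continuous in its entries. Closely related is the control of $\M^L(g|_{[0,t)})$, $\M^U(g|_{[0,t)})$ for $t\notin\K$, which the affine-rescaling comparison handles but which must be meshed carefully with the Riemann-sum limit. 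Everything else — the reductions via Lemmas~\ref{lem:Hfininf}, \ref{lem:SymIN}, \ref{lem:38} and the bookkeeping translating discrete sums into integrals — is routine.
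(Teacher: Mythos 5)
Your overall architecture is the paper's: the same two inequalities, the same reductions through Lemmas~\ref{lem:Hfininf}, \ref{lem:SymIN}, \ref{lem:EQAff} and \ref{lem:*}, and the same translation between $\int_0^s \M^U(f|_{[0,t)})\,dt$ (resp.\ $\M^L$) and the discrete Hardy sum via step functions and Riemann sums. The problem is that both halves of your argument are anchored on the identity $\M^L(h)=\M^U(h)=\Mm h(x)\,dx$ for nonincreasing $\K$-simple $h$ --- you need it to read $\M^L(f|_{[0,\Lambda_n)})$ as $\M\big((x_1,\dots,x_n),(\lambda_1,\dots,\lambda_n)\big)$ in the lower bound, and to replace $\M^U(g|_{[0,t)})$ by $\Mm_0^t g(x)\,dx$ in the upper bound. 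You correctly flag this as the genuine obstacle, but the sandwiching you propose cannot close it, because the identity is \emph{false} under the hypotheses of the theorem. Take $\M$ to be the weighted upper median (symmetric, monotone, but discontinuous in the weights) and $h=y_1$ on $[0,\tfrac12)$, $h=y_2$ on $[\tfrac12,1)$ with $y_1>y_2$: then $\Mm h(x)\,dx=y_1$, while the nonincreasing $\K$-simple functions $g_\eta$ equal to $y_1$ on $[0,\tfrac12-\eta)$ and to $y_2$ on $[\tfrac12-\eta,1)$ satisfy $\RG(g_\eta)=\RG(h)$, $\norm{g_\eta-h}_1\to0$ as $\K\ni\eta\to0$, and $\Mm g_\eta(x)\,dx=y_2$, so $\M^L(h)\le y_2<\Mm h(x)\,dx$. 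The paper's proof is engineered precisely so that only the trivial half $\M^L(h)\le\Mm h(x)\,dx\le\M^U(h)$ is ever used, supplemented by one-sided monotone comparisons against displaced functions: in part (i) the test function is $f_\varepsilon=\min\{\chi^*+\varepsilon e^{-x},\sup I\}$, lying strictly above the $\K$-simple $\chi^*$, so that $\M^L(f_\varepsilon|_{[0,u)})\ge\Mm_0^u\chi^*(t)\,dt$ follows from monotonicity; in part (ii) one chooses a nonincreasing $\K$-simple $g\ge f^*$ with $g\in B_{1/K}(f^*)$ and bounds $\M^U(f^*|_{[0,q)})$ from above by $\Mm_0^q g(u)\,du$, the passage $q\nearrow t$ being handled by the semicontinuity of Lemma~\ref{lem:38} rather than by any exact evaluation of $\M^U$ on simple functions. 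Until the identity is replaced by such one-sided estimates, neither of your two inequalities is established.

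Two smaller points. In your Riemann-sum step the left-endpoint sum is $\sum_i\mu_i\,\M\big((y_1,\dots,y_{i-1}),(\mu_1,\dots,\mu_{i-1})\big)$, i.e.\ the Hardy sum with the weights shifted by one index; your claim that the discrepancy is controlled because $\sum_j|\mu_{j+1}-\mu_j|\to0$ fails for general refining partitions (that sum can stay of order $s$ for alternating meshes) and is only saved by taking the uniform mesh, exactly as the paper does with its auxiliary step function built from the values $g(\tfrac{i-1}{K})$. And in part (i) you assume the extremal discrete configuration may be taken nonincreasing as a ``known'' fact; the paper instead derives the needed rearrangement on the spot from symmetry and monotonicity via $\Mm_0^u\chi(t)\,dt\le\Mm_0^u\chi^*(t)\,dt$, which is the safe route since that extremality statement is not among the quoted results.
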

\begin{proof}
 By $\M^L \le \M^U$ we have $\iHc(\M^L)\le\iHc(\M^U)$. We prove two remaining inequalities: (i)~$\Hc(\M) \le \iHc(\M^L)$ and (ii) 
$\iHc(\M^U) \le \Hc(\M)$. This naturally splits the whole proof into separate parts.

{\bf Part (i).} Fix $H\in[1,\Hc(\M))$. By the definition (see \cite[Theorem 2.8]{PalPas19b}) there exists a sequence $(\lambda_n)$
having elements in $\K_+:=\K\cap(0,+\infty)$ with $\sum_{n=1}^\infty \lambda_n=\infty$ and $(x_n)\in \ell^1(\lambda)$ such that 
\Eq{H1}{
\sum_{n=1}^\infty \lambda_n \M \big((x_1,\dots,x_n),(\lambda_1,\dots,\lambda_n)\big) \ge H \sum_{n=1}^\infty \lambda_nx_n.
}
Define $\Lambda_N:=\sum_{n=1}^N \lambda_n$ for $N\in\N$, $\Lambda_0:=0$, and $\chi \colon [0,\infty) \to I$ by
\Eq{*}{
\chi(t)=x_n \text{ for }t \in [\Lambda_{n-1},\Lambda_n)\text{ where }n\in\N.
}

Using this function we can rewrite \eq{H1} in the integral-type spirit as follows
\Eq{22dec1400}{
\sum_{n=1}^\infty \lambda_n \Mm_0^{\Lambda_n} \chi(t)\:dt \ge H \int_0^{\infty} \chi(t)\:dt.
}

But, as $\M$ is symmetric and monotone we obtain
\Eq{*}{
\Mm_0^u \chi(t)\:dt= \Mm_0^u \chi|_{[0,u)}(t)\:dt =\Mm_0^u (\chi|_{[0,u)})^*(t)\:dt 
\le \Mm_0^u \chi^*(t)\:dt \quad \text{ for all }u \in \K_+.
}

Now, for $\varepsilon>0$ set $f_\varepsilon \colon [0,+\infty)\to I$ by $f_\varepsilon(x) := \min\{\chi^*(x)+\varepsilon e^{-x},\sup I\}$. Then as $\M$ is monotone and $\chi^*$ is a $\K$-simple function below $f_\varepsilon$ we get
\Eq{*}{
\M^L \big(f_\varepsilon|_{[0,u)}\big) \ge \Mm_0^{u} \chi^*(t)\:dt \ge \Mm_0^{u} \chi(t)\:dt \qquad \text{ for all }u \in \K_+.
}
Thus by \eq{22dec1400}
\Eq{*}{
\sum_{n=1}^\infty \lambda_n \M^L \big(f_\varepsilon|_{[0,\Lambda_n)}\big) \ge H \int_0^{\infty} \chi(t)\:dt.
}
In view of Lemma~\ref{lem:EQAff}, as $f_\varepsilon$ is nonincreasing and $\M^L$ is monotone, we obtain 
\Eq{*}{
\M^L\big(f_\varepsilon|_{[0,u)}\big)=\M^L \big(f_\varepsilon(\tfrac{u}{u'}\:\cdot\:)|_{[0,u')}\big)\le \M^L \big(f_\varepsilon|_{[0,u')}\big) \qquad\text{ for all }u,u' \in \R\text{ with }u>u'>0,
}
i.e. the mapping $u \mapsto \M^L(f_\varepsilon|_{[0,u)})$ is nonincreasing. Therefore
\Eq{*}{
\sum_{n=1}^\infty \lambda_n \M^L(f_\varepsilon|_{[0,\Lambda_n)})  
\le \sum_{n=1}^\infty \int_{\Lambda_{n-1}}^{\Lambda_n} \M^L(f_\varepsilon|_{[0,u)})\:du   
=\int_0^{\infty} \M^L(f_\varepsilon|_{[0,u)})\:du.
}
This implies
\Eq{*}{
H \int_0^{\infty} \chi(t)\:dt 
&\le \sum_{n=1}^\infty \lambda_n \M^L(f_\varepsilon|_{[0,\Lambda_n)})\\
&\le  \int_0^{\infty} \M^L(f_\varepsilon|_{[0,u)})\:du 
\le \iHc(\M^L) \int_0^\infty f_\varepsilon(t)\:dt\\
&=\iHc(\M^L) \int_0^\infty \chi^*(t)+\varepsilon e^{-t}\:dt 
=\iHc(\M^L) \Big(\varepsilon+\int_0^\infty \chi(t)\:dt\Big),
}
in a limit case as $\varepsilon \to 0$, it implies $H \le \iHc(\M^L)$. Therefore, as $H\in[1,\Hc(\M))$ was taken arbitrarily, we obtain $\Hc(\M) \le \iHc(\M^L)$.

{\bf Part (ii).} Fix $K\in\N$ and let $f \colon [0,1) \to I$ be a measurable function. 
Repeating the argumentation above by Lemma~\ref{lem:SymIN} we get

\Eq{*}{
\M^U\big(f|_{[0,q)}\big) \le \M^U\big((f|_{[0,q)})^*\big)\le\M^U(f^*|_{[0,q)}) \text{ for all }q \in\K \cap (0,1].
}

On the other hand, there exists a nonincreasing function $g \in B_{1/K}(f^*)$ with $g \ge f^*$.  Whence, by the definition of $\M^U$ for all $q \in \K \cap (0,1]$ we have
\Eq{*}{
\M^U\big(f^*\big|_{[0,q)}\big) \le \Mm_0^q g(u)\:du.
} 
Upon passing $q \nearrow t$ we obtain $f^*(\frac qt (\cdot)) \searrow f^*(\cdot)$ and thus, by Lemma~\ref{lem:38},
\Eq{*}{
\M^U\big(f\big|_{[0,t)}\big)\le\M^U\big(f^*\big|_{[0,t)}\big)\le\limsup_{\substack{q \nearrow t\\q \in \K}} \Mm_0^{q} g(u)\:du\qquad \text{ for all }t \in[0,1).
}
But as $g$ is nonincreasing, in view of Lemma~\ref{lem:*}, so is $\K \ni q\mapsto \Mm_0^{q} g(t)\:dt$. Thus
\Eq{*}{
\M^U\big(f\big|_{[0,t)}\big)\le\inf_{q \in [0,t)\cap\K} \Mm_0^{q} g(u)\:du\qquad \text{ for all }t \in[0,1).
}

Applying the upper-Riemann integral side-by-side we obtain
\Eq{*}{
\upRiemannint01 \M^U\big(f\big|_{[0,t)}\big)\:dt \le \upRiemannint01 \inf_{q \in [0,t)\cap\K} \Mm_0^{v} g(u)\:du\:dt\:.
}
Define $h \colon [0,1) \to I$ by 
\Eq{*}{
h|_{\big[\frac{i-1}{K},\frac{i}{K}\big)}=g\big(\tfrac{i-1}{K}\big)\qquad\text{ for all }i \in \{1,\dots,K\}.
}
Then $h$ is nonincreasing and $h\ge g$. Thus
\Eq{*}{
\upRiemannint{0}{1} \M^U\big(f\big|_{[0,t)}\big)\:dt
&\le \upRiemannint{0}{1} \inf_{q \in [0,t)\cap\K} \Mm_0^{q} g(u)\:du\:dt
\le\upRiemannint{0}{1} \inf_{q \in [0,t)\cap\K} \Mm_0^{q} h(t)\:du\:dt\\
&=\sum_{n=1}^K \upRiemannint{\frac{n-1}K}{\frac nK} \inf_{q \in [0,t)\cap\K} \Mm_0^{q} h(t)\:du\:dt
\le \frac{h(0)}K+\frac1K\sum_{n=2}^K \Mm_0^{\frac{n-1}K} h(t)\:dt
\\
&=\frac{1}{K}\sum_{n=1}^K\M\Big(h(0),h\big(\tfrac1K\big),\dots,h\big(\tfrac{n-1}K\big)\Big)
\le \frac{\Hc(\M)}{K}\sum_{n=1}^Kh\big(\tfrac{n-1}K\big)\\
&\le\Hc(\M) \Big( \frac{h(0)}{K}+\int_0^1  g(x)dx\Big).
}
But as $g \in B_{1/K}(f^*)$ we have $\norm{f^*-g}_1\le \tfrac1K$. Moreover $h(0)=g(0)$, whence
\Eq{*}{
\upRiemannint01 \M^U\big(f\big|_{[0,u)}\big) &\le \Hc(\M) \bigg( \frac{g(0)+1}{K}+\int_0^1  f^*(x)dx\bigg)
=\Hc(\M) \bigg( \frac{g(0)+1}{K}+\int_0^1  f(x)dx\bigg).
}
If we take a limit $K \to \infty$ we get
\Eq{*}{
\upRiemannint01 \M^U\big(f\big|_{[0,u)}\big) &\le \Hc(\M) \int_0^1  f(x)dx.
}
By Lemma~\ref{lem:Hfininf} this inequality implies $\iHc(\M^U) \le \Hc(\M)$, which completes the proof.
\end{proof}

Let us now show the important corollary. Namely, for $p,q \in \R$ we can define the function $\chi_{p,q}\colon\R_+\to\R$ by
\Eq{*}{
  \chi_{p,q}(x)
  :=\begin{cases}
    \dfrac{x^p-x^q}{p-q} & \mbox{ if } p\neq q, \\[4mm]
    x^p\ln(x) & \mbox{ if } p=q.
    \end{cases}
}
Then for every $p,q \in \R$, function $E_{p,q}\colon \R_+^2\to\R$ defined by 
\Eq{*}{
  E_{p,q}(x,y):=y^p\chi_{p,q}\Big(\frac{x}{y}\Big)
}
is a deviation function on $\R_+$. The weighted deviation mean generated by $E_{p,q}$ will be denoted by $\G_{p,q}$ and called the \emph{Gini mean (of parameter $p,q$)} (cf.\ \cite{Gin38}). Mean $\G_{p,q}$ has the following explicit form:
\Eq{GM}{
  \G_{p,q}(x,\lambda)
   :=\left\{\begin{array}{ll}
    \left(\dfrac{\lambda_1x_1^p+\cdots+\lambda_nx_n^p}
           {\lambda_1x_1^q+\cdots+\lambda_nx_n^q}\right)^{\frac{1}{p-q}} 
      &\mbox{if }p\neq q, \\[4mm]
     \exp\left(\dfrac{\lambda_1x_1^p\ln(x_1)+\cdots+\lambda_nx_n^p\ln(x_n)}
           {\lambda_1x_1^p+\cdots+\lambda_nx_n^p}\right) \quad
      &\mbox{if }p=q.
    \end{array}\right.
}
Clearly, in the particular case $q=0$, the mean $\G_{p,q}$ reduces to the $p$th power mean $\P_p$. It is also obvious that Gini means are continuous in their entries and weights and $\G_{p,q}=\G_{q,p}$. According to Losonczi \cite{Los71a,Los71c}, Gini mean $\G_{p,q}$ is monotone for all $(p,q)$ with $pq\le 0$. Moreover it is known \cite[Corollary~4.2]{PalPas16} that 
\Eq{E:HGpq}{
\Hc(\G_{p,q})
=\begin{cases} 
\left( \dfrac{1-q}{1-p} \right)^{\frac1{p-q}} & p \ne q \text{ and } \min(p,q)\le0\le\max(p,q)<1, \\
e & p=q=0,\\
+\infty & \min(p,q)> 0\text{ or }\max(p,q)\ge1.
\end{cases}
}
In the case $\max(p,q)<0$ the value of $\Hc(\G_{p,q})$ is unknown (but finite).
Then by Theorem~\ref{thm:DevExt} for all $p,q\in\R$ Gini means admit unique extensions to integral means $\G_{p,q}^0$ is given by 

\Eq{*}{
  \G_{p,q}^0(f)
   :=\left\{\begin{array}{ll}
    \left(\dfrac{\int f(t)^p\:dt}
           {\int f(t)^q\:dt}\right)^{\frac{1}{p-q}} 
      &\mbox{if }p\neq q, \\[4mm]
     \exp\left(\dfrac{\int f(t)^p\ln f(t)\:dt}
           {\int f(t)^p\:dt}\right) \quad
      &\mbox{if }p=q,
    \end{array}\right.
}
where $f \colon J \to \R_+$ is a measurable function such that this means are well defined, i.e. 
\Eq{*}{
\G_{p,q}^0 &\colon \calL(\R_+)\cap L^p\cap L^q \to\R_+ \qquad &\text{ for }p \ne q,\\
\G_{p,q}^0 &\colon \calL(\R_+)\cap L^p\cap (L^p\log^+L) \to \R_+ \qquad &\text{ for }p = q.
}
By Theorem~\ref{thm:Hardy}, we obtain $\iHc(\G_{p,q}^0)=\Hc(\G_{p,q})$ whenever $pq\le 0$.
Applying test function $f(t)=e^{-t}$ we can check that this equality is also valid for $\min(p,q)>0$. Therefore \eq{E:HGpq} remains valid with $\Hc(\G_{p,q})$ replaced by $\iHc(\G_{p,q}^0)$. 
If $\max(p,q)\le 0$ then, as $\G_{p,q}^0$ is nondecreasing in $p$ and $q$ we get $\iHc(\G_{p,q}^0) \le \iHc(\G_{0,0}^0)=e$. In particular these integral means are Hardy. This is a partial solution of the problem posted in \cite[Problem~4, p.~89]{KufMalPer07}.

At the very end, let us mention the following theorem related to deviation means
\begin{thm}[\cite{PalPas18a}, Theorem~3.4]
Let $f\colon\R_+\to\R$ be a strictly increasing concave function with $f(1)=0$. Then $E(x,y):=f(\tfrac{x}{y})$ is a deviation, corresponding mean $\D_E$ is a Hardy 
mean if and only if 
\Eq{*}{
  \int_0^1f\Big(\frac{1}{t}\Big)dt<+\infty
}
and, if the above inequality holds, then its Hardy constant is the unique positive solution $c$ of the equation
\Eq{*}{
  \int_0^cf\Big(\frac{1}{t}\Big)dt=0.
}
\end{thm}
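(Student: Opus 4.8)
The plan is to treat both assertions—when $\D_E$ is Hardy, and the value of $\Hc(\D_E)$—at once, by reducing to a one-dimensional extremal problem, matching a sharp self-similar profile for the lower bound, and then proving a summed (non-pointwise) upper estimate. First I would record the structural facts. Since $f(1)=0$ we get $E(x,x)=f(1)=0$, and $y\mapsto f(x/y)$ is continuous and strictly decreasing because $f$ is continuous (being concave) and strictly increasing; hence $E$ is a deviation. As $x\mapsto f(x/y)$ is strictly increasing, $\D_E$ is monotone, and rescaling the defining equation gives $\D_E(tx,\lambda)=t\,\D_E(x,\lambda)$, so $\D_E$ is homogeneous. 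Writing $m_n:=\D_E\big((x_1,\dots,x_n),(\lambda_1,\dots,\lambda_n)\big)$, so that $\sum_{i=1}^n\lambda_i f(x_i/m_n)=0$, the symmetry and monotonicity of $\D_E$ let me invoke the standard rearrangement principle for Hardy sums to assume $x_1\ge x_2\ge\cdots$; then $(m_n)$ is nonincreasing, the discrete form of Lemma~\ref{lem:*}. All subsequent work is on such nonincreasing data.

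Second, I would pin down $c$ through the sharp self-similar profile and simultaneously prove $\Hc(\D_E)\ge c$ together with the necessity of the convergence condition. The guiding computation is continuous: for the profile $x(t)=1/t$ the running mean equals $m(t)=c/t$ \emph{exactly} when $c$ satisfies the defining equation, since substituting $u=cs/t$ collapses the constraint to
\[ \int_0^t f\Big(\tfrac{t}{cs}\Big)\,ds=\frac{t}{c}\int_0^c f\Big(\tfrac1u\Big)\,du . \]
The substitution $r=1/u$ rewrites $\int_0^c f(1/u)\,du=0$ as $\int_{1/c}^\infty f(r)\,r^{-2}\,dr=0$, and likewise $\int_0^1 f(1/t)\,dt=\int_1^\infty f(r)\,r^{-2}\,dr$. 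Setting $\Phi(K):=\int_K^\infty f(r)\,r^{-2}\,dr$ I note $\Phi'(K)=-f(K)/K^2$, so $\Phi$ increases on $(0,1)$, decreases on $(1,\infty)$, with $\Phi(0^+)=-\infty$ and $\Phi(+\infty)=0^+$; thus $\Phi$ has a unique zero $K^\ast\in(0,1)$ precisely when $\Phi(1)=\int_1^\infty f(r)r^{-2}dr<+\infty$, i.e.\ when $\int_0^1 f(1/t)\,dt<+\infty$, and then $c:=1/K^\ast>1$ is the unique positive root of $\int_0^c f(1/t)\,dt=0$. To make this an honest discrete lower bound I would use geometric test data $x_k=\theta^{\,k}$ with equal weights and estimate the ratio $\sum_k\lambda_k m_k/\sum_k\lambda_k x_k$ by Riemann-sum comparison with the power profiles $x(t)=t^{-\gamma}$: the latter give ratio $c(\gamma)=1/K(\gamma)$, where $K(\gamma)$ solves $\int_0^1 f(K\sigma^{-\gamma})\,d\sigma=0$, and letting $\gamma\nearrow1$ (so $\theta\nearrow1$) yields $c(\gamma)\to c$ by continuity of $\Phi$. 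When $\int_0^1 f(1/t)\,dt=+\infty$ the same equation forces $K(\gamma)\to0$, hence $c(\gamma)\to+\infty$, so no finite Hardy constant exists; this is the ``only if'' direction.

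Third—and this is where the real work lies—I would prove the upper bound $\sum_{n=1}^N\lambda_n m_n\le c\sum_{n=1}^N\lambda_n x_n$ for nonincreasing data. The essential difficulty is that it cannot be done termwise: the pointwise bound $m_n\le c\,x_n$ is \emph{false} (for rapidly decaying $x$ the running mean is dominated by the large initial entries and overshoots $c\,x_n$), so the equality $m(t)=c\,x(t)$ in the extremal profile is a global, not local, phenomenon. My plan is a telescoping argument: introduce a nonnegative auxiliary potential $\Phi_n$ adapted to the deficit in the defining equation—modeled on the extremal identity $m=cx$ and on the functional $\Phi$ above—and prove a one-step inequality
\[ \lambda_n m_n\le c\,\lambda_n x_n+(\Phi_{n-1}-\Phi_n). \]
The mechanism producing this step is the concavity of $f$: comparing $\sum_{i\le n}\lambda_i f(x_i/m_n)=0$ with $\sum_{i\le n-1}\lambda_i f(x_i/m_{n-1})=0$ and linearizing through the supporting tangent $f(a)\le f(b)+f'(b)(a-b)$ converts the two nonlinear constraints into an additive relation among $m_{n-1},m_n,x_n$ that can be summed. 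Summing over $n$ and checking $\Phi_N\to0$ as $N\to\infty$ (using $\sum\lambda_n x_n<\infty$ and homogeneity) then yields the claim, and with the second step gives $\Hc(\D_E)=c$.

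I expect the construction of the correct potential $\Phi_n$, and the verification that the tangent-line linearization is asymptotically lossless, to be the main technical obstacle; by contrast the boundary and convergence bookkeeping, the uniqueness of $c$ via the monotonicity profile of $\Phi$, and the passage from geometric test data to the sharp ratio should be routine.
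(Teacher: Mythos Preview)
This theorem is not proved in the present paper at all: it is quoted verbatim from \cite{PalPas18a} (Theorem~3.4 there), and the only contribution of the present paper concerning it is the single sentence following the statement, namely that Theorems~\ref{thm:DevExt} and~\ref{thm:Hardy} transport the result from the discrete weighted setting to the integral mean $\D_E^0$ with the same Hardy constant. So there is no ``paper's own proof'' to compare against; you are attempting to reprove the cited external result from scratch, which is not what is being asked here.

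On the merits of your sketch itself: the structural observations and the lower-bound strategy (self-similar profile, geometric test sequences, the analysis of the zero of $\Phi$) are plausible and in the right spirit. But the upper bound is not a proof---you explicitly defer the construction of the potential $\Phi_n$ and the verification that the concavity linearization closes up, calling it ``the main technical obstacle.'' That is exactly the substantive content of the theorem, and until a concrete $\Phi_n$ is written down and the one-step inequality checked, this part remains a hope rather than an argument. If you want to actually reconstruct the proof, you should consult \cite{PalPas18a} directly; their argument does not proceed via a discrete telescoping potential as you propose.
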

Applying Theorems~\ref{thm:DevExt} and \ref{thm:Hardy} above, we obtain that this theorem remains valid for the integral deviation means $\D_E^0$, too.


\end{document}